\newtheorem{theorem}{Theorem}[section] 
\newtheorem{conj}{Conjecture}[section] 
\newtheorem{lemma}[theorem]{Lemma}
\newtheorem{prop}[theorem]{Proposition}
\newtheorem{remark}[theorem]{Remark}
\numberwithin{equation}{section}
\begin{document}

\title{Scaling Properties of a Moving Polymer}
\author{Carl Mueller}
\address{Carl Mueller
\\Department of Mathematics
\\University of Rochester
\\Rochester, NY  14627}
\email{carl.e.mueller@rochester.edu}
\author{Eyal Neuman}
\address{Eyal Neuman
\\Department of Mathematics
\\Imperial College
\\London, UK}
\email{e.neumann@imperial.ac.uk}
\thanks{CM was partially supported by a Simons grant.} 
\keywords{polymer, scaling, heat equation, white noise, stochastic partial differential equations.}
\subjclass[2020]{Primary, 60H15; Secondary, 82D60.}

\begin{abstract} 
We set up an SPDE model for a moving, weakly 
self-avoiding polymer with intrinsic length $J$ taking values in 
$(0,\infty)$.  Our main result states that the effective radius of 
the polymer is approximately $J^{5/3}$; evidently for large $J$ 
the polymer undergoes stretching.  This contrasts with the 
equilibrium situation without the time variable, where many earlier 
results show that the effective radius is approximately $J$.  

For such a moving polymer taking values in $\mathbf{R}^2$, we offer a 
conjecture that the effective radius is approximately $J^{5/4}$.  
\end{abstract}

\maketitle

\section{Introduction}
\label{section:introduction}

Because of their widespread presence in the physical world, polymers 
have been intensively studied in chemistry, statistical mechanics, 
probability, and other fields.  See Doi and Edwards \cite{DE09} for a 
wide-ranging treatment from the physical point of view, and den 
Hollander \cite{dH09}, Giacomin \cite{Gia07}, and Bauerschmidt et. 
al. \cite{BDGS12} for rigorous mathematical results.   
K\"onig and van der Hofstad \cite{vdHK2001} discuss the one-dimensional case.

From the mathematical point of view, the study of polymers is 
hampered by the many complicated factors influencing their shapes.  
The simplest model for a polymer is a random walk where the time 
parameter of the walk represents the distance along the polymer.  In 
this model we assume that new segments are attached to the end of the 
polymer with a random orientation.  Perhaps the most important 
modification of this model is to penalize self-intersection; clearly 
two segments of the polymer cannot occupy the same position at the 
same time.  If self intersection is prohibited, then we are led to 
study self-avoiding random walks.  There is a large literature on 
this subject, see Madras and Slade \cite{MS13}. One feature of 
interest is the macroscopic extension of the polymer, and there are 
various ways to quantify this notion.  We give our own definition 
later, which we call the effective radius or simply the radius. The 
extension is often measured by the variance of the end-to-end 
distance, $E[|S_n|^2]$, where $S_n$ is the location of the polymer at 
$n$ units from its beginning $S_0$.  From now on we will also assume 
that $S_0=0$.  

A famous problem is to show that when $(S_n)_{n\in\mathbf{N}_0}$ is the simple 
random walk on $\mathbf{Z}^d$ with self-avoiding paths, we have 
$E[|S_n|^2]\approx Cn^{2\nu}$ where
\begin{equation} \label{conj-nu}
\nu=
\begin{cases}
1 & \text{if $d=1$}  \\
3/4 & \text{if $d=2$}  \\
0.588\dots & \text{if $d=3$}  \\
1/2 & \text{if $d\geq4$}
\end{cases}
\end{equation}
and there should be a logarithmic correction for $d=4$.  
The case of $d=1$ is obvious and for $d\geq5$ the
result  has been verified by Hara and Slade \cite{HS92:CMP,HS92:RMP}. 
The other cases are open except for partial results in the $d=4$ case, see
page 400 of \cite{BDGS12} and also \cite{BSTW17} and the references therein.
The same results should hold for weakly 
self-avoiding random walks, that is, when the probability of a path of 
length $n$ is penalized by an exponential term involving the number of 
self-intersections.  

The case $d=1$ is the simplest, but it still presents some 
challenging problems.  For example, consider weakly 
self-avoiding one-dimensional simple random walks $(S_n)_{n\in\mathbf{N}_0}$.  
With $S_0=0$, one could try to characterize the limiting speed,
\begin{equation*}
\lim_{n\to\infty}\frac{1}{n}\left(E\left[S_n^2\right]\right)^{1/2}
\end{equation*}
and there is a fairly complete answer, see Greven and den Hollander 
\cite{GdH93}. Here the key observation was that the occupation 
measure for simple random walk obeys a Markov property similar to 
the Ray-Knight theorem for Brownian motion. There has also been work 
on the continuous-time situation, see van der Hofstad, den Hollander, and
K\"onig \cite{vdHdHK97}.

One limitation of the above models is that they do not take time into 
account.  Of course a real polymer changes its shape over time.
On page 5 of \cite{dH09} den Hollander comments:

\begin{quote}
``We will not (!) consider models where the length or the configuration
of the polymer changes with time (e.g. due to growing or shrinking, 
or to a Metropolis dynamics associated with the Hamiltonian for an 
appropriate choice of allowed transitions). These non-equilibrium 
situations are very interesting and challenging indeed, but so far 
the available mathematics is very thin.''
\end{quote}
The goal of this paper is to make a contribution in this direction, 
for the continuous case and in one dimension.  We would also like to point
out that in our situation, it is not clear that the occupation
measure has a Markov property as in the time-independent case.

\subsection{Setup}

In this section we motivate and define our weakly self-avoiding 
polymer model, and define the radius.  The Rouse model is commonly used to study moving 
polymers without self-avoidance; the polymer is modeled as a sequence 
of balls connected by springs, with friction due to an ambient fluid.  
Doi and Edwards \cite{DE09}, in equation (4.9) on page 92 explain how to take a 
limit and obtain the following stochastic partial differential 
equation (SPDE), which is also called the Edwards-Wilkinson model in 
the context of surface growth (see Funaki \cite{Fun83} for a rigorous derivation),
\begin{equation} \label{eq:edwards-wilkinson}
\begin{aligned}
\partial_tu&=\partial_x^2u+\dot{W}(t,x),  \\
u(0,x)&=u_0(x)   ,
\end{aligned}
\end{equation} 
where $(\dot{W}(t,x))_{t\ge0,x\in[0,J]}$ is a two-parameter white noise.  We
assume that
the intrinsic length of the polymer is $J$, by which we mean that  
$x\in[0,J]$.  Since the ends of the polymer are not fixed, we impose 
Neumann boundary conditions
\begin{equation} \label{boundary} 
\partial_xu(t,0)=\partial_xu(t,J)=0.
\end{equation}
We also assume that $u_0$ is continuous on $[0,J]$.  

As is well-known, we do not expect solutions $(u(t,x))_{t\ge0,x\in[0,J]}$ 
to be differentiable in either variable, so we must regard 
\eqref{eq:edwards-wilkinson} as shorthand for an integral equation, 
usually called the mild form:
\begin{equation} \label{eq:m-form}
u(t,x)=G_t(u_0)(x)+\int_{0}^{t}\int_{0}^{J}G_{t-s}(x,y)W(dyds),
\end{equation}
where 
\begin{equation*}
G_t(f)(x)=\int_{0}^{J}G_t(x,y)f(y)dy 
\end{equation*}
and $G_t(x,y)=G^J_t(x,y)$ is the Neumann heat kernel on $x,y\in[0,J]$ which
solves
\begin{align*}
\partial_t G_t(x,y)&=\partial_x^2G_t(x,y) ,  \\
\partial_xG_t(0,y)&=\partial_xG_t(J,y)=0 , \\
G_0(x,y)&=\delta(x-y).
\end{align*}
Writing 
\begin{equation*}
G^{\mathbf{R}}_t(x)=\frac{1}{\sqrt{4\pi t}}\exp\left(-\frac{x^2}{4t}\right) ,
\end{equation*}
for the heat kernel on $\mathbf{R}$, we have
\begin{equation}
\label{eq:neumann-expansion}
G_t(x,y)=\sum_{k\in\mathbf{Z}}G_t^{\mathbf{R}}(x-y-2kJ)
 +\sum_{k\in\mathbf{Z}}G_t^{\mathbf{R}}(x+y-(2k+1)J).
\end{equation}

It is well-known that for $t>0$ the process $x\to u(t,x)$ is a 
Brownian motion plus a smooth function, see Exercise 3.10 in Chapter 
III.4 of Walsh \cite{wal86} and also Proposition 1 in Mueller and Tribe 
\cite{MT02}.  Therefore we may define an occupation measure and a 
local time as follows, where $m(\cdot)$ is Lebesgue measure,
\begin{equation}  \label{l-time}
\begin{aligned}
L_t(A)&=m\{x\in[0,J]: u(t,x)\in A\}  \\
\ell_t(y)&=\frac{L_t(dy)}{dy}.
\end{aligned}
\end{equation} 
Now we define a weakly self-avoiding process.  For continuous 
processes, the usual way of doing this is to weight the original 
probability measure by the exponential of the integral of local time 
squared, see \cite{dH09} Section 3.1. One might think that 
instead of the fixed-time occupation measure $L_t(A)$ defined above, 
we should consider the overall occupation measure 
$m\{(t,x)\in[0,T]\times[0,J]: u(t,x)\in A\}$.   
However, at different times there is no reason that two parts of the 
polymer cannot be in the same position, so we study $L_t$ as defined 
above.  Then $\ell_t(y)$ represents the density of values of $x$ for 
which $u(t,x)\in dy$, for $t$ fixed.  

If $P_{T,J}$ denotes the 
original probability measure of $(u(t,x))_{t\in[0,T],x\in[0,J]}$, 
we define the probability $Q_{T,J,\beta}$ as follows.  For 
clarity, we will let $E^{P_{T,J}},E^{Q_{T,J,\beta}}$ denote the 
expectations with respect to $P_{T,J}$ and $Q_{T,J,\beta}$ respectively.  
We write $E$ for $E^{P_{T,J}}$.  
Let
\begin{equation} \label{z-func} 
\begin{aligned}
\mathcal{E}_{T,J,\beta}
&=\exp\left(-\beta\int_{0}^{T}\int_{-\infty}^{\infty}\ell_t(y)^2dydt\right),\\
Z_{T,J,\beta}&=E[\mathcal{E}_{T,J,\beta}]
 =E^{P_{T,J}}[\mathcal{E}_{T,J,\beta}].
\end{aligned}
\end{equation} 
where $\beta$ is a parameter representing inverse temperature, 
following the usual convention in statistical mechanics.  
Then we define
\begin{equation} \label{eq:def-Q}
Q_{T,J,\beta}(A)=\frac{1}{Z_{T,J,\beta}}E
 \big[\mathcal{E}_{T,J,\beta}\mathbf{1}_A\big].
\end{equation}
For ease of notation, we will usually drop the subscripts except for 
$T$ and write
\begin{equation*}
P_T=P_{T,J} , \qquad Q_T=Q_{T,J,\beta}  , \qquad 
\mathcal{E}_T=\mathcal{E}_{T,J,\beta} , \qquad Z_T=Z_{T,J,\beta}.
\end{equation*}

Finally, we define the radius of the polymer $u$.  The 
most common definition of the radius of a polymer $(p(x))_{x\in[0,J]}$
involves the end-to-end distance $|p(J)-p(0)|$, but we find it more 
convenient to study the standard deviation of the distance from the center of mass.
Define
\begin{equation}\label{u-bar}
\bar{u}(t)=\frac{1}{J}\int_{0}^{J}u(t,x)dx
\end{equation}
and define the radius of $(u(t,x))_{t\in[0,T],x\in[0,J]}$ to be
\begin{equation*}
R(T,J)=\left[\frac{1}{TJ}\int_{0}^{T}\int_{0}^{J}
 \big(u(t,x)-\bar{u}(t)\big)^2 dxdt \right]^{1/2}.
\end{equation*}
\subsection{Statement of the main result}

For any $\beta, J > 0$ we define 
\begin{equation}\label{eq:h-def}
 h(\beta,J) =\begin{cases}
     1  , &  \text{for }    0\leq  \beta J^{7/2} \leq e,\\ 
             \log(\beta J^{7/2}) ,  & \text{for }   \beta J^{7/2} >e. 
\end{cases} 
\end{equation}
Here is our main result.  
\begin{theorem}
\label{th:main}
There are constants $\varepsilon_0,K_0,K_1>0$ not depending on 
$\beta,J$ such that the following hold.  

(i) For all $J>0$ and $\beta\geq eJ^{-7/2}$ we have
\begin{equation*}
\lim_{T\to\infty}Q_{T}\Big[\varepsilon_0h(\beta,J)^{-1}\beta^{1/3}J^{5/3}
 \leq R(T,J)\leq K_{0} h(\beta,J)^{1/2} \beta^{1/3} J^{5/3}\Big]=1.
\end{equation*}

(ii) For all $J\geq 1$ and $0<\beta< eJ^{-7/2}$ we have
\begin{equation*}
\lim_{T\to\infty}Q_{T}\Big[\varepsilon_0 \beta^{1/3}J^{5/3}
 \leq R(T,J)\leq K_1J^{5/3}\Big]=1.
\end{equation*}
\end{theorem}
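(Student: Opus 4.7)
The heuristic behind the scaling is a free-energy balance. Under the free measure $P_T$, forcing the radius to exceed $R\gg\sqrt J$ over time $T$ costs of order $TR^2/J^3$ Gaussian entropy — the typical transverse fluctuation of $u(t,\cdot)$ is $\sqrt J$ and the longest Neumann mode has relaxation time $J^2$ — while, if $u(t,\cdot)$ has range $\sim R$, then $\int\ell_t(y)^2\,dy\gtrsim J^2/R$, so the self-repulsion in $\mathcal E_T$ contributes at least $\beta TJ^2/R$. Minimising $TR^2/J^3+\beta TJ^2/R$ in $R$ yields $R\propto\beta^{1/3}J^{5/3}$ with optimal free energy of order $T\beta^{2/3}J^{1/3}$. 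My plan is to turn this heuristic into a proof in three steps.

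\textbf{Step 1 (bounds on $Z_T$).} I would first establish two complementary lower bounds on the partition function. By Jensen and the fact that $u(t,\cdot)$ is Brownian up to a smooth piece (so $E[\int\ell_t^2\,dy]$ is of order $J^{3/2}$), one obtains $\log Z_T\geq -cT\beta J^{3/2}$. A sharper trial bound restricts $u(t,\cdot)$ to a suitable neighbourhood of the stretched cosine profile $v_\star(x)=m\sqrt{2/J}\cos(\pi x/J)$ with $m\sim\beta^{1/3}J^{13/6}$, chosen so that $v_\star$ has radius $R_\star=\beta^{1/3}J^{5/3}$; a Cameron--Martin shift on the Ornstein--Uhlenbeck lowest mode $a_1(t)$ shows that this neighbourhood has $P_T$-probability at least $\exp(-cT\lambda_1^2 m^2)=\exp(-cT\beta^{2/3}J^{1/3})$, and on it $\int\!\int\ell_t^2\leq CTJ^2/R_\star$, yielding $\log Z_T\geq -CT\beta^{2/3}J^{1/3}h(\beta,J)$. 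The Jensen bound dominates precisely when $\beta J^{7/2}\leq e$, matching the threshold in the definition \eqref{eq:h-def} of $h$.

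\textbf{Step 2 (lower bound on $R$).} Next I would use the pathwise chain
\[\int\ell_t(y)^2\,dy\ \geq\ \frac{\bigl(\int\ell_t\,dy\bigr)^2}{\max_x u(t,x)-\min_x u(t,x)}\ =\ \frac{J^2}{\mathrm{range}(u(t,\cdot))}\ \geq\ \frac{J^{5/2}}{\bigl(\int_0^J(u-\bar u)^2dx\bigr)^{1/2}},\]
followed by Jensen in $t$ applied to the convex map $x\mapsto 1/\sqrt x$, to obtain the deterministic bound $\int_0^T\!\!\int\ell_t^2\,dy\,dt\geq TJ^2/R$. Combined with the exponential Chebyshev inequality $Q_T\bigl(\int\!\int\ell^2>M\bigr)\leq e^{-\beta M}/Z_T$ and the estimates of Step 1, this gives, with $Q_T$-probability tending to $1$, the bounds $\int\!\int\ell^2\leq CT\beta^{-1/3}J^{1/3}h(\beta,J)$ in regime (i) and $\int\!\int\ell^2\leq CTJ^{3/2}$ in regime (ii); solving for $R$ produces the advertised lower bounds.

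\textbf{Step 3 (upper bound on $R$).} For the upper bound I would expand $u(t,x)-\bar u(t)=\sum_{k\geq1}a_k(t)e_k(x)$ in the Neumann eigenbasis, so that the $a_k$ are independent stationary Ornstein--Uhlenbeck processes with mean-reversion rates $\lambda_k=(k\pi/J)^2$ and $TJR^2=\sum_{k\geq1}\int_0^Ta_k(t)^2\,dt$. Using the explicit OU Laplace transform
\[\log E\bigl[\exp\bigl(\theta\textstyle\int_0^Ta_k^2\,dt\bigr)\bigr]\ \leq\ \tfrac{T}{2}\bigl(\lambda_k-\sqrt{\lambda_k^2-2\theta}\,\bigr),\qquad 0\leq\theta<\tfrac12\lambda_k^2,\]
summing over $k$, and applying Chernoff optimised in $\theta$ near $\lambda_1^2/2$, one obtains $P(R>r)\leq\exp(-cTr^2/J^3)$ for $r^2\gg J$. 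Since $Q_T(R>r)\leq P(R>r)/Z_T$, Step 1 makes this $o(1)$ whenever $r\geq K_0h(\beta,J)^{1/2}\beta^{1/3}J^{5/3}$ in regime (i), or $r\geq K_1J^{5/3}$ in regime (ii) (with $K_1$ large enough that $r^2/J^3\geq K_1^2J^{1/3}$ dominates $\beta J^{3/2}\leq eJ^{-2}$ for $J\geq 1$). The hardest part of the argument is precisely this step: one must control $\sum_k\log E[\exp(\theta\int a_k^2)]$ uniformly as $\theta\uparrow\lambda_1^2/2$ and then balance the Chernoff optimum against the $Z_T$ lower bound from Step 1. The small logarithmic gap between the two bounds on $R$ (factors $h^{-1}$ versus $h^{1/2}$ in Theorem~\ref{th:main}) originates from this balance: both the width of the trial neighbourhood in Step 1 and the Chernoff parameter in Step 3 carry an unavoidable logarithmic cost.
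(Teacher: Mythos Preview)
Your overall architecture matches the paper's: a lower bound on $Z_T$ via a Cameron--Martin tilt in the direction $\varphi_1$ followed by Jensen (your Step~1 is exactly the paper's Proposition~2.1), an eigenfunction expansion combined with large deviations for the Ornstein--Uhlenbeck modes (your Step~3 is the paper's Proposition~4.1, which quotes the Bercu--Rouault LDP rather than working directly with the Laplace transform, but the content is the same), and the small-radius estimate sandwiched between the two.

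However, the deterministic chain in Step~2 contains a wrong inequality. You write
\[
\frac{J^2}{\mathrm{range}(u(t,\cdot))}\ \geq\ \frac{J^{5/2}}{\bigl(\int_0^J(u-\bar u)^2\,dx\bigr)^{1/2}},
\]
which is equivalent to $\bigl(\int_0^J(u-\bar u)^2\bigr)^{1/2}\geq J^{1/2}\,\mathrm{range}$. The opposite is true: since $|u(t,x)-\bar u(t)|\leq\mathrm{range}$ pointwise, one always has $\bigl(\int_0^J(u-\bar u)^2\bigr)^{1/2}\leq J^{1/2}\,\mathrm{range}$, and the ratio can be made arbitrarily small (take $u$ equal to $0$ on $[0,J-\varepsilon]$ and linear to a large value on $[J-\varepsilon,J]$). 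So you cannot pass from range to $\theta_u(t,J)$ this way, and the claimed pathwise bound $\int_0^T\!\int\ell_t^2\,dy\,dt\geq TJ^2/R$ does not follow from your chain.

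The paper obtains the same final inequality (up to constants) by a different, correct route: on $\{R(T,J)\leq K\}$ one has $\theta_u(t,J)^2\leq 2K^2$ for a set of times of measure at least $T/2$; for each such $t$, Chebyshev in $x$ gives $|\{x:|u(t,x)-\bar u(t)|\leq 2K\}|\geq J/2$, so at least $J/2$ of the local-time mass sits in an interval of length $4K$, and Cauchy--Schwarz on that interval yields $\int\ell_t^2\geq (J/2)^2/(4K)$. Integrating over the good times gives $\int_0^T\!\int\ell_t^2\geq cTJ^2/K$, and the rest of your Step~2 (the exponential Chebyshev $Q_T(\int\!\int\ell^2>M)\leq e^{-\beta M}/Z_T$ and solving for $R$) then goes through exactly as you wrote. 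Replace your second displayed inequality by this Chebyshev-in-$x$ argument and the proof is complete.
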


\begin{remark} There is a barrier to sharpening the second inequality 
in (ii) with respect to dependence on $\beta$, see Remark 
\ref{rem:ldp} and the explanation after \eqref{req}.  So there is a gap in the upper bound of (ii) regarding $\beta$.  
But we would like to point out that most results for weakly 
self-avoiding polymers do not give end-to-end distance depending on 
$\beta$ (or its analogue) either.  
\end{remark}

\begin{remark}
We give an intuitive justification for Theorem \ref{th:main} in the 
Appendix.  
\end{remark}

\subsection{Outline of the proof}

The following strategy for bounding the right side of 
\eqref{eq:def-Q}  was already used in Bolthausen 
\cite{Bol90}.  

In view of statements (i) and (ii) of Theorem \ref{th:main}, we define
\begin{equation} \label{k2-def}
K_2 =
\begin{cases}
\beta^{1/3}h(\beta,J)^{1/2}K_0 & \text{if $\beta\geq  eJ^{-7/2}$}  \\
K_0 & \text{if $\beta< e J^{-7/2}$}
\end{cases}
\end{equation}
and choose $K_0>0$ later.  Define the following events.  
\begin{equation} \label{events}
\begin{aligned} 
A^{(1)}_T&=\{R(T,J)<\varepsilon_0h(\beta,J)^{-1}\beta^{1/3}J^{5/3}\} ,  \\
A^{(2)}_T&=\{R(T,J)>K_2\beta^{1/3}J^{5/3}\}.   
\end{aligned}
\end{equation}
It suffices to show that for $i=1,2$ we have
\begin{equation*}
\lim_{T\to\infty}Q_{T}\big(A^{(i)}_T\big)=0 .
\end{equation*}
From \eqref{eq:def-Q} we see that it is enough to find:
\begin{enumerate}
\item a lower bound on $Z_T$, derived in Section \ref{sec-part},
\item and an upper bound on 
$E^{P_T}\big[\mathcal{E}_T\mathbf{1}_{A^{(i)}_T}\big]$ 
for $i=1,2$, obtained in Sections \ref{sec-small} and \ref{sec-large}, respectively. 
\end{enumerate}
Finally, the upper bounds divided by the lower bound should vanish 
as $T\to\infty$.  

As mentioned above, Greven and den Hollander \cite{GdH93} give 
a precise result for the length of the growing polymer in the case 
without an extra time parameter.  Their argument depends on special 
properties of the local time or occupation measure, which are not 
available in our case.  Bolthausen's argument \cite{Bol90} starts 
from first principles and gives a less precise result, but does not 
depend on these special properties, so we carry over some of his 
ideas.

\subsection{A conjecture about two dimensions}

We build on the physical reasoning of Flory (see Madras and Slade 
\cite{MS13}, subsection 2.2) and offer a conjecture about the case in 
which the polymer takes values in $\mathbf{R}^2$.  The reasoning is 
given in the Appendix.  

Here we assume that $\mathbf{u}=(u_1,u_2)$ is a 
vector-valued solution to \eqref{eq:edwards-wilkinson}, where 
$\mathbf{u}_0$ is also vector valued, and the Neumann 
boundary conditions \eqref{boundary} still hold.  Also, we assume 
that $\dot{\mathbf{W}}=(\dot{W}_1,\dot{W}_2)$ is a vector of 
independent white noises.  

Since we believe that $x\to\mathbf{u}(t,x)$ behaves like a 
two-dimensional Brownian motion, the local time will not 
exist.  Instead, we should use either
\begin{enumerate}
\item Varadhan's renormalized self-intersection local time \cite{var69}, or 
\item A mollified version of local time such as 
\begin{equation*}
\ell_t^\phi(y) = \int_{0}^{J}\phi(\mathbf{u}(t,x)-y)dx
\end{equation*}
where $y\in\mathbf{R}^2$ and $\phi:\mathbf{R}^2\to[0,\infty)$ is 
compactly supported in a neighborhood of 0.
\end{enumerate}
Replacing $\ell$ by one of these alternatives, we define $R(T,J)$ 
as before.  We do not state our conjecture as precisely as Theorem 
\ref{th:main}, nor do we speculate about the dependence of $R(T,J)$ 
on $\beta$.  

\begin{conj}
\label{conj:dim-2}
With high probability,
\begin{equation*}
R(T,J)\approx J^{5/4}.
\end{equation*}
\end{conj}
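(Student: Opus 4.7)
The plan is to mirror the two-sided strategy used for Theorem~\ref{th:main}, adapted to the vector-valued setting. I would work with a smooth, radial, compactly supported mollifier $\phi$ and the associated $\ell_t^\phi$, so that the self-intersection penalty takes the tractable form
\begin{equation*}
\beta\int_0^T\int_{\mathbf{R}^2}\ell_t^\phi(y)^2\,dy\,dt
 = \beta\int_0^T\iint_{[0,J]^2}\Phi\bigl(\mathbf{u}(t,x)-\mathbf{u}(t,y)\bigr)\,dx\,dy\,dt,
\end{equation*}
with $\Phi=\phi\ast\phi$. In analogy with the events in \eqref{events}, define
$A^{(1)}_T=\{R(T,J)<\varepsilon_0\beta^{1/4}J^{5/4}\}$ and $A^{(2)}_T=\{R(T,J)>K_0\beta^{1/4}J^{5/4}\}$; then Conjecture~\ref{conj:dim-2} reduces to showing $E^{P_T}[\mathcal{E}_T\mathbf{1}_{A^{(i)}_T}]/Z_T\to0$ for $i=1,2$, exactly the scheme from the paper's outline.

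The target exponent should come out of a Flory-type balance parallel to the one-dimensional case. At each fixed $t$, the stationary profile $\mathbf{u}(t,\cdot)-\bar{\mathbf{u}}(t)$ consists of two independent Gaussian fields on $[0,J]$ with natural radius $\sqrt{J}$; a Gaussian large-deviation estimate driven by the lowest Neumann eigenmode gives that the instantaneous event $\{R^2\approx r\}$ has probability $\asymp\exp(-cr/J)$, and compounding over time $T$ with relaxation time $\tau\sim J^2$ yields an overall cost $\asymp\exp(-cTR^2/J^3)$. On the repulsive side, a polymer of intrinsic length $J$ spread over a disk of radius $R$ has typical density $J/R^2$ and hence $\int(\ell_t^\phi)^2\,dy\asymp J^2/R^2$, producing an integrated penalty $\asymp\beta TJ^2/R^2$. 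Minimising $cTR^2/J^3+\beta TJ^2/R^2$ over $R$ returns $R\asymp\beta^{1/4}J^{5/4}$, matching the conjecture (with the Flory-predicted $\beta^{1/4}$ about which the paper deliberately does not speculate).

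Following the roadmap of the 1D proof, I would then: first, lower-bound $Z_T$ by restricting to an ensemble of SPDE paths whose low Fourier modes are tuned so that the mean-subtracted profile has diameter $\asymp\beta^{1/4}J^{5/4}$, evaluating both the Gaussian cost of the restriction and the only-polynomial self-intersection penalty it carries; second, upper-bound $E^{P_T}[\mathcal{E}_T\mathbf{1}_{A^{(1)}_T}]$ by observing that $R\ll\beta^{1/4}J^{5/4}$ forces $\int(\ell_t^\phi)^2$ to be so large that $\mathcal{E}_T$ is super-exponentially small; third, upper-bound $E^{P_T}[\mathcal{E}_T\mathbf{1}_{A^{(2)}_T}]$ using the Gaussian tail estimate above, quantified along the lines of Section~\ref{sec-large}. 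The third step decouples coordinate-wise into two independent copies of the one-dimensional estimate, so that ingredient of the 1D machinery should transfer with essentially no change.

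The genuine obstacles are two-dimensional in origin. First, bounding $\int(\ell_t^\phi)^2$ quantitatively in terms of the geometric radius $R$ requires ruling out local concentration of the polymer on scales much finer than the support of $\phi$: unlike in 1D, where continuity plus self-avoidance pin down the density, a two-dimensional arc can in principle fold into a thin filament of much higher effective density, breaking the heuristic $J/R^2$ estimate. Second, passing from fixed $\phi$ to Varadhan's renormalised self-intersection local time requires proving that the divergent counterterm depends only on $J$ and $T$ and not on the radius, so that it cancels between numerator and denominator of $Q_T$; establishing this uniformity is, I believe, the crux of the difficulty and the reason the $J^{5/4}$ scaling is stated only as a conjecture. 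A subsidiary issue is that in the critical dimension one expects logarithmic corrections analogous to the factor $h(\beta,J)$ appearing in \eqref{eq:h-def}; these would likely surface as $\log(\beta J^{?})$ powers in the upper and lower bounds, and tracking them would require a more careful renormalisation than sketched here.
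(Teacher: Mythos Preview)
The statement is a \emph{conjecture}, and the paper does not prove it. The paper's only justification is the heuristic Flory-type balance in the Appendix (Section~A.3): equating the Gaussian cost $CTR^2/J^3$ with the local-time cost $CTJ^2/R^2$ (coming from density $J/R^2$ over a disk of area $R^2$) yields $R\approx J^{5/4}$. Your energy-versus-entropy computation reproduces exactly this balance, so at the heuristic level your proposal and the paper's reasoning coincide.

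Where you go beyond the paper is in sketching a rigorous programme---tilting to lower-bound $Z_T$, a small-radius bound via a lower bound on $\int(\ell_t^\phi)^2$, and a large-radius bound via Gaussian tails---and, more usefully, in flagging the genuine obstructions: the lack of a pointwise lower bound on the mollified occupation density (the ``thin filament'' problem) and the need to show that the renormalisation counterterm in Varadhan's construction is radius-independent. The paper does not address either of these; it explicitly declines to speculate on $\beta$-dependence and offers no roadmap toward a proof. So your proposal is not a proof and cannot be judged against one, but as an analysis of what would be required it is accurate and goes further than the paper does.
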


\section{ Lower Bound on the Partition Function}  \label{sec-part} 

In this section we derive a lower bound on the partition function 
$Z_T$ which was defined in \eqref{z-func}.  This bound is 
given in the following proposition.  Recall that $h$ was defined in 
\eqref{eq:h-def}. 
\begin{prop} \label{prop-part}  
For any $\beta>0$ and $J>0$ we have 
\begin{equation*}
\liminf_{T\rightarrow \infty } \frac{1}{T}\log Z_T   \geq  -CJ^{1/3}h(\beta,J) \beta^{2/3}, 
\end{equation*}
where $C>0$ is a constant independent from $J$ and $\beta$. 
\end{prop}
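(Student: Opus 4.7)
The plan is to obtain the lower bound on $Z_T=E[\mathcal{E}_T]$ by restricting the expectation to a ``good'' event on which (i) the polymer $u(t,\cdot)$ has a roughly monotonic ``ramp'' shape of amplitude $R$, so that $\int_{-\infty}^{\infty}\ell_t(y)^2\,dy$ is controlled, and (ii) the probability of the event admits a tractable lower bound via a Girsanov change of measure applied to the driving white noise $\dot W$. The target scale is $R\sim\beta^{1/3}J^{5/3}$, mirroring the conjectured optimal radius in Theorem \ref{th:main}.

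The first step is to spectrally decompose $u(t,x)=\bar u(t)+\sum_{k\ge1}\hat u_k(t)\phi_k(x)$ in the orthonormal Neumann eigenbasis $\phi_k(x)=\sqrt{2/J}\cos(k\pi x/J)$. The modes $\hat u_k$ are independent Ornstein--Uhlenbeck processes with relaxation rates $\lambda_k=(k\pi/J)^2$ and stationary variances $J^2/(2\pi^2k^2)$. Because $\lambda_1$ is the smallest rate and $\phi_1$ is the only monotonic eigenfunction, the idea is to Girsanov-tilt the Brownian motion driving $\hat u_1$ by a constant drift $\mu_1\sim\lambda_1\cdot R\sqrt J\sim R/J^{3/2}$, counteracting the mean reversion and pinning $\hat u_1(t)$ near $R\sqrt J$ throughout $[0,T]$, while leaving the higher modes essentially untouched. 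The Radon--Nikodym log-cost of this tilt is $\tfrac12\mu_1^2 T\sim R^2T/J^3$.

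The second step is to bound $\int_{-\infty}^{\infty}\ell_t(y)^2\,dy$ on the tilted event. In the bulk of $[0,J]$ the mode-1 slope is of order $R/J$, dominating the typical higher-mode derivative, so $\ell_t\sim J/R$ on its support of length $\sim R$, and the naive estimate gives $\int\ell_t^2\,dy\sim J^2/R$. Combining with the Girsanov cost yields
\begin{equation*}
\frac{1}{T}\log Z_T\;\ge\;-C\Bigl[\tfrac{\beta J^2}{R}+\tfrac{R^2}{J^3}\Bigr],
\end{equation*}
and optimizing over $R$ gives $R\sim\beta^{1/3}J^{5/3}$ with bound $-C\beta^{2/3}J^{1/3}$. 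The factor $h(\beta,J)$ in the proposition will then enter as a logarithmic correction discussed below.

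The main technical obstacle lies in the endpoint regions $x\approx 0,J$, where $\phi_1'(x)$ vanishes and the inverse-slope calculation for $\int\ell_t^2\,dy$ diverges logarithmically. In practice this is regularized by the Brownian-scale fluctuations of the higher modes: matching the mode-1 deviation $R(x/J)^2$ to the typical fluctuation $\sqrt x$ identifies a crossover scale $x_\ast\sim(J^2/R)^{2/3}$ below which the noise dominates, producing a factor of order $\log(J/x_\ast)\sim\log(\beta J^{7/2})=h(\beta,J)$ in the true self-intersection estimate. Carefully controlling these endpoint fluctuations uniformly in $t\in[0,T]$, along with a concentration estimate ruling out atypical sign changes of $\partial_x u$ in the bulk from the higher Neumann modes, will be the most delicate part of the argument.
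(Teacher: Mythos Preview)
Your high-level plan matches the paper's: tilt the white noise by a multiple of the first Neumann eigenfunction $\varphi_1$ (equivalently, add a constant drift to the Brownian motion driving the first Fourier mode), balance the Girsanov cost $\sim a^2T$ against the gain in the local-time exponent, and optimize over $a$. The scaling reduction and the identification of the endpoint regions (where $\varphi_1'$ vanishes) as the source of the $\log(\beta J^{7/2})$ factor are also on target.

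The substantive difference is how the local-time term is handled after the tilt. You propose to restrict to a good event on which $u(t,\cdot)$ has a ramp shape and bound $\int\ell_t(y)^2\,dy$ \emph{pathwise}, then lower-bound the probability of that event. The paper instead applies Jensen's inequality directly after the change of measure:
\[
\log Z_T \;=\; \log \hat E\Bigl[\exp\Bigl(-\beta\!\int_0^T\!\!\!\int\ell_t^2 - \log\tfrac{d\hat P}{dP}\Bigr)\Bigr]
\;\ge\; -\beta\,\hat E\Bigl[\int_0^T\!\!\!\int\ell_t^2\Bigr] - \hat E\Bigl[\log\tfrac{d\hat P}{dP}\Bigr].
\]
The second term is exactly $a^2T/2$. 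For the first, the paper uses the identity $\hat E\bigl[\int\ell_t(y)^2\,dy\bigr]=\int_0^1\!\int_0^1 \hat g_{t,x_1,x_2}(0)\,dx_1\,dx_2$, where $\hat g_{t,x_1,x_2}$ is the Gaussian density of the increment $u(t,x_2)-u(t,x_1)$ under $\hat P$, together with stationarity of the pinned string to make this $t$-independent. Everything then reduces to a \emph{deterministic} double integral of a Gaussian density with variance $\asymp|x_1-x_2|$ and mean $\asymp a(\cos\pi x_1-\cos\pi x_2)$; the logarithm emerges from this integral precisely because the mean vanishes quadratically near $x_1+x_2\in\{0,2\}$.

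This Jensen route completely bypasses the pathwise and uniform-in-$t$ issues you flag as ``the most delicate part.'' Your approach is not wrong in spirit, but note that $x\mapsto u(t,x)$ is Brownian-like and hence nowhere monotone, so ``ruling out sign changes of $\partial_x u$'' cannot be taken literally; you would need a coarse-grained monotonicity statement plus a quantitative local-time bound for Brownian motion with spatially varying drift, held simultaneously for a positive fraction of $t\in[0,T]$. That is doable but carries substantial overhead with no gain over the expectation-based computation.
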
 
The proof of Proposition \ref{prop-part} is delayed to the end of this section as we will need additional ingredients in order to derive this result. 
 
We first define the radius $R_u$ and then derive the scaling properties of 
$R_u(T,J)$, $\mathcal{E}_{T,J,\beta}$ and $Z_{T,J,\beta}$ in $J$. 
We therefore  introduce some additional definitions and notation, which will be used later on. Let
\begin{equation} \label{theta}
\theta_{u}(t,J)
:=\left[\frac{1}{J}\int_{0}^{J}\big(u(t,x)-\bar{u}(t)\big)^2dx\right]^{1/2},
\quad
0\leq t \leq T.
\end{equation}
We also write $\theta_\varphi(J)$ when $\varphi$ is a function which may not 
depend on $t$; the definition is the same as for $\theta_u$.  

Define
\begin{equation} \label{eq:H-def} 
R_\varphi(T,J)=\left(\frac{1}{T}
  \int_{0}^{T}\theta_\varphi(t,J)^2dt\right)^{1/2}.
\end{equation}

Let  $(w(t,x))_{t\geq 0,x\in D}$ be a double-indexed 
stochastic process, where $D \subset \mathbb{R}$ is a compact set.  
In analogy to the local time of $u$ which was defined in 
\eqref{l-time}, we define 
$\ell_{t}^{w}= (\ell^{w}_{t}(y))_{y\in \mathbf{R}}$ as the local 
time of $w(t,\cdot)$, whenever it exists. Moreover we define 
$\mathcal{E}^{w}_{T,J,\beta}$ ($Z^{w}_{T,J,\beta}$) as exponential of 
the squared local time of $w$ (partition function), which corresponds 
to $\mathcal{E}_{T,J,\beta}$ ($Z_{T,J,\beta}$) in \eqref{z-func}. 
Recall that $Q_{T,J,\beta}$ was defined in \eqref{eq:def-Q}. In 
similar way we define $Q^{w}_{T,J,\beta}$ when we refer to the 
process $w$. 
\\ 

Finally, recall that $u$ satisfies \eqref{eq:edwards-wilkinson} on $x\in[0,J]$, with boundary conditions \eqref{boundary}. 
In the following lemma we establish the scaling properties of $R_u(T,J)$, $\mathcal{E}^{u}_{T,J,\beta}$ and $Q^{u}_{T,J,\beta }(\cdot)$ in $J$. 
\begin{lemma} \label{lem-scale}
Let $T,J>0$. Define
 \begin{equation} \label{def-v}
v(t,x):=J^{-1/2}u(J^2t,Jx).
\end{equation}
Then the following holds: 
\begin{itemize} 
\item[\textbf{(i)}] $v$ satisfies \eqref{eq:edwards-wilkinson} on $x\in[0,1]$, with a different white 
noise.  
\item[\textbf{(ii)}] For any constant $\beta>0$ we have  
\begin{equation*}
\mathcal{E}^{u}_{T,J,\beta }
\stackrel{\mathcal{D}}{=} \mathcal{E}^{v}_{TJ^{-2},1,\beta J^{7/2} },  \quad Q^{u}_{T,J,\beta }(\cdot) =Q^{v}_{TJ^{-2},1,\beta J^{7/2} }(\cdot).
\end{equation*}
\item[\textbf{(iii)}] \begin{equation*}
R_u(T,J) \stackrel{\mathcal{D}}{=} J^{1/2}R_v(J^{-2}T,1). 
\end{equation*}
\end{itemize} 
\end{lemma}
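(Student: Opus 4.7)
The three parts all flow from the single observation that $v(t,x) = J^{-1/2} u(J^2 t, Jx)$ is designed as a self-similarity of the Edwards--Wilkinson equation, and I would prove them in order. For (i), I would substitute $(J^2 t, Jx)$ into the mild form \eqref{eq:m-form} and multiply by $J^{-1/2}$. Two scaling identities do the work: first, $G^J_{J^2 t}(Jx, Jy) = J^{-1} G^1_t(x,y)$, which I would verify term-by-term from the series \eqref{eq:neumann-expansion} using $G^\mathbf{R}_{J^2 t}(Ju) = J^{-1} G^\mathbf{R}_t(u)$; second, $W(J^2 ds, J dy) \stackrel{\mathcal{D}}{=} J^{3/2} \tilde{W}(ds, dy)$ for a fresh space--time white noise $\tilde{W}$ on $[0,\infty) \times [0,1]$, which is immediate from the covariance of the Brownian sheet. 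The prefactor $J^{-1/2} \cdot J^{-1} \cdot J^{3/2} = 1$ then collapses the stochastic integral into the mild form on $[0,1]$ driven by $\tilde{W}$, while the same change of variables turns $J^{-1/2} G_{J^2 t}(u_0)(Jx)$ into $G^1_t(v_0)(x)$ with $v_0(x) := J^{-1/2} u_0(Jx)$.

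For (ii), the key step is a scaling relation for the local time. Starting from \eqref{l-time} and changing variables $x = Jx'$ in $L^u_\tau(B) = m\{x \in [0,J] : u(\tau, x) \in B\}$, one obtains $L^u_\tau(J^{1/2} B) = J \cdot L^v_{\tau/J^2}(B)$, and hence $\ell^u_\tau(y) = J^{1/2}\, \ell^v_{\tau/J^2}(J^{-1/2} y)$. Squaring and integrating with the change of variables $s = \tau/J^2$, $z = J^{-1/2} y$ yields
\begin{equation*}
\int_0^T \int_{\mathbf{R}} \ell^u_\tau(y)^2 \, dy \, d\tau \;=\; J^{7/2} \int_0^{T/J^2} \int_{\mathbf{R}} \ell^v_s(z)^2 \, dz \, ds ,
\end{equation*}
which after multiplying by $\beta$ identifies $\mathcal{E}^u_{T,J,\beta}$ with $\mathcal{E}^v_{T/J^2, 1, \beta J^{7/2}}$ pathwise, hence in distribution. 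The equality $Q^u_{T,J,\beta} = Q^v_{T/J^2, 1, \beta J^{7/2}}$ of polymer measures then follows, since the numerator and denominator in \eqref{eq:def-Q} transform identically and the relevant events pull back along the deterministic bijection $u \leftrightarrow v$.

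For (iii), I would insert $u(t,x) = J^{1/2} v(t/J^2, x/J)$ into \eqref{u-bar} to obtain $\bar{u}(t) = J^{1/2}\, \bar{v}(t/J^2)$, and then perform the change of variables $(s,y) = (t/J^2, x/J)$ in the double integral defining $R_u(T,J)^2$, producing $R_u(T,J)^2 = J \cdot R_v(T/J^2, 1)^2$. No substantive obstacle is expected; throughout, the only point requiring care is bookkeeping of the powers of $J$ coming from the Jacobians, from the scaling of the Neumann heat kernel, and from the rescaling of the white noise. The identities $-\tfrac{1}{2} - 1 + \tfrac{3}{2} = 0$ in (i) and $1 + \tfrac{1}{2} + 2 = \tfrac{7}{2}$ in (ii) --- the latter being the sum of one $J^{1}$ from squaring $\ell^u$, one $J^{1/2}$ from the Jacobian in $dy$, and one $J^{2}$ from the Jacobian in $d\tau$ --- are what make the exponents come out precisely as stated.
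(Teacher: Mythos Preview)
Your proposal is correct and follows essentially the same route as the paper: the same heat-kernel scaling $G^J_{J^2t}(Jx,Jy)=J^{-1}G^1_t(x,y)$ and white-noise scaling for (i), the same local-time identity $\ell^u_\tau(y)=J^{1/2}\ell^v_{\tau/J^2}(J^{-1/2}y)$ leading to the $J^{7/2}$ factor for (ii), and the same change of variables in the double integral for (iii). Your bookkeeping of the $J$-exponents matches the paper's computation exactly.
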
 
The proof of Lemma \ref{lem-scale} is postponed to Section \ref{section-scale}. 

From Lemma \ref{lem-scale}(ii) it follows that we can prove Proposition 
\ref{prop-part} for $J=1$ and then use the scaling properties of $Z_{T}$ to 
generalize the result for any $J>0$. Therefore in the remainder of the 
section we assume that $J=1$. 
\medskip \\
 
Next we state a few useful facts on the Fourier decomposition of the solution 
to \eqref{eq:edwards-wilkinson}, which are taken from Chapter III.4 of 
\cite{wal86}. Note that the stochastic heat equation in Chapter III.4 of 
\cite{wal86} includes also a linear drift term, however this only changes the 
eigenvalues of the equation and does not affect the eigenfunctions. 
 
\medskip 
Let $(\varphi_n(\cdot),-\lambda_n)_{n\geq0}$ be the sequence of 
orthonormal Neumann eigenfunctions and eigenvalues of the Laplacian on 
$[0,1]$, with $(\lambda_n)_{n\ge0}$ in increasing order. Then
\begin{equation} \label{phi-0}
\varphi_0(x)=1, \qquad \lambda_0=0
\end{equation}
and for $n\geq1$,
\begin{equation} \label{phi-n}
\varphi_n(x)=\sqrt{2}
 \cos(n\pi x), \qquad \lambda_n=n^2\pi^2.
\end{equation}
Recall that the mild form of \eqref{eq:edwards-wilkinson} was
defined in \eqref{eq:m-form}. Define 
\begin{equation} \label{N-def} 
N(t,x)= \int_{0}^{t}\int_0^JG_{t-s}(x,y)W(dyds), \quad t\geq 0, \ x\in [0,J]. 
\end{equation} 
It follows that for $J=1$,
\begin{equation} \label{n-decom} 
N(t,x)=\sum_{n=0}^{\infty}X^{(n)}_t\varphi_n(x),
\end{equation}
where
\begin{equation}  \label{ou-n}
\begin{aligned} 
dX^{(n)}_t&=-\lambda_nX^{(n)}_tdt+dB^{(n)}_t , \\
X^{(n)}_0&=0
\end{aligned}
\end{equation} 
and $(B^{(n)}_\cdot)_{n\geq0}$ is a collection of independent Brownian 
motions.

\medskip 

As in Section 3 of \cite{Bol90} we define a measure 
$\hat P_T^{(a)}=\hat{P}^{(a)}_{T,1,\beta}$ that adds a drift to the process.  In our 
situation, it would be logical to simply add a constant drift to the white 
noise.  However, adding such drift would just shift the solution 
to \eqref{eq:edwards-wilkinson} to the right (or left).  So we would 
like to add a drift which increases or decreases with $x$.  

In order to do that, we take the noise $\dot{W}$ and add a 
drift $a\varphi_{1}(\cdot)$. In what what follows we fix $T>0$. Recall that 
\begin{equation} 
\label{phi-norm} 
\int_{0}^{1}\varphi^{2}_{1}(x)dx =1. 
\end{equation} 
Then the Cameron-Martin formula for spacetime white noise (see Dawson  
\cite{Daw78}, Theorem 5.1) gives 
\begin{equation} \label{q-hat}
\begin{aligned} 
\frac{d\hat P_T^{(a)}}{dP_T}
&=\exp\left(\int_{0}^{T}\int_{0}^{1}a\varphi_{1}(x)W(dxdt)
 -\frac{1}{2}\int_{0}^{T}\int_{0}^{1}a^2\varphi_{1}^{2}(x)dxdt\right)  
   \\
&=\exp\left(\int_{0}^{T}\int_{0}^{1}a\varphi_{1}(x)W(dxdt)\right)
 \Big/\hat{\zeta}(T,a),
\end{aligned}
\end{equation} 
where
\begin{equation} 
\label{zeta-hat}
\hat{\zeta}(T,a)
 =\exp\left(\frac{Ta^2}{2}\right).
\end{equation}

We write $\hat{E}$ for the expectation $E^{\hat P_T^{(a)}}$ with respect to 
$\hat P_T^{(a)}$.  Let $\beta >0$, and recall that $\ell_t(\cdot)$ is the 
local time of the process $x\to u(t,x)$.  Now using Jensen's inequality, we get
\begin{equation} \label{z-beta-low} 
\begin{aligned}
 \log Z_T  
&= \log \hat{E}\left[\exp\left(-\beta\int_{0}^{T}
 \int_{-\infty}^{\infty}\ell_t(y)^2dydt 
  -\log\frac{d\hat P_T^{(a)}}{dP_{T}}\right)\right]   \\
&\geq -\beta\hat{E}\left [\int_{0}^{T}
 \int_{-\infty}^{\infty}\ell_t(y)^2dydt \right ]
 - \hat{E}\left[\log\frac{d\hat P_T^{(a)}}{dP_{T}}\right]  \\
&=: -\beta I_{1}(T) - I_{2}(T).
\end{aligned}
\end{equation}
In what follows we derive the asymptotic behaviour of 
$I_{i}(T)$ for $i=1,2$ when $T\rightarrow \infty$, which will 
help us to prove Proposition \ref{prop-part}. 

\begin{prop} \label{lemma-i-1}
There exist positive constants $C_{1},C_{2}>0$ not depending on $J,T,a$
such that:
\begin{itemize} 
\item[\textbf{(i)}] for any $0<a \leq 1$ we have 
\begin{equation*}
\lim_{T\rightarrow \infty }\frac{I_1(T)}{T}
 =\frac{C_{1}}{a},
\end{equation*}
\item[\textbf{(ii)}]  for any $a>1$ we have 
\begin{equation*}
\lim_{T\rightarrow \infty }\frac{I_1(T)}{T}
 =C_2 \frac{ \log a}{a}.
\end{equation*}

\end{itemize} 
\end{prop}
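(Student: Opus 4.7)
The plan is to reduce $I_1(T)/T$ to a deterministic double integral via the Gaussian structure of $u$ under $\hat P_T^{(a)}$, and then to analyse that integral separately in the small-$a$ and large-$a$ regimes. Fubini and the occupation-time identity give
\begin{equation*}
\hat E\Bigl[\int_{-\infty}^{\infty}\ell_t(y)^2\,dy\Bigr] = \int_0^1\!\!\int_0^1 q_t(x,x')\,dx\,dx',
\end{equation*}
where $q_t(x,x')$ is the density at $0$ of the Gaussian random variable $u(t,x)-u(t,x')$ under $\hat P_T^{(a)}$. The Fourier decomposition \eqref{n-decom}--\eqref{ou-n}, together with the fact that the Cameron--Martin drift $a\varphi_1$ acts only on the first mode $X^{(1)}$, produces closed forms for the mean $\mu_t(x,x')$ and variance $\sigma_t^2(x,x')$ of $u(t,x)-u(t,x')$. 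Both converge exponentially as $t\to\infty$ to stationary limits $\mu$ and $\sigma^2$, where $\mu(x,x')=\tfrac{a\sqrt{2}}{\pi^2}(\cos(\pi x)-\cos(\pi x'))$ and $\sigma^2(x,x')=\sum_{n\ge1}(n^2\pi^2)^{-1}(\cos(n\pi x)-\cos(n\pi x'))^2$; the latter sums to $|x-x'|$ by the standard identification of the Neumann Brownian bridge covariance.

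Plugging these into the density and using dominated convergence (with $\sigma_t^2(x,x')\gtrsim|x-x'|$ controlling the $|x-x'|^{-1/2}$ singularity), followed by Cesaro averaging in $t$, yields
\begin{equation*}
\lim_{T\to\infty}\frac{I_1(T)}{T} = J(a) := \int_0^1\!\!\int_0^1 \frac{1}{\sqrt{2\pi|x-x'|}}\exp\!\left(-\frac{\mu(x,x')^2}{2|x-x'|}\right)dx\,dx'.
\end{equation*}
It then remains to estimate $J(a)$. Using the identity $\cos(\pi x)-\cos(\pi x')=-2\sin(\tfrac{\pi(x+x')}{2})\sin(\tfrac{\pi(x-x')}{2})$ gives a clean integrand in sum/difference coordinates $s=x+x'$, $d=x-x'$.

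For (i), $a\in(0,1]$, the exponent $\mu^2/(2|x-x'|)$ is uniformly bounded by a constant times $a^2\le1$, so the exponential lies between two absolute positive constants and $J(a)$ is comparable to $\int\!\!\int|x-x'|^{-1/2}\,dx\,dx'$, an absolute constant; hence $J(a)\le C_1/a$ on this range. For (ii), $a>1$, I would split the domain into a bulk region $\{x,x'\in[\delta,1-\delta]\}$ and boundary layers near the endpoints. In the bulk, $|\cos(\pi x)-\cos(\pi x')|\asymp|x-x'|$ forces $\mu^2/(2|x-x'|)\asymp a^2|x-x'|$, and a linear change of variables gives a contribution of order $1/a$. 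Near $x=x'=0$, Taylor expansion yields $\cos(\pi x)-\cos(\pi x')\approx-\tfrac{\pi^2}{2}(x-x')(x+x')$, hence $\mu^2/(2|x-x'|)\asymp a^2 s^2 |d|$, so the Gaussian effectively restricts to $|d|\lesssim\min(s,1/(a^2 s^2))$. The resulting integral splits at the crossover $s\sim a^{-2/3}$: the range $s\lesssim a^{-2/3}$ contributes $\int\sqrt{s}\,ds\sim 1/a$, while $s\gtrsim a^{-2/3}$ contributes $\int ds/(as)\sim\log(a)/a$, producing the dominant $C_2\log(a)/a$ term.

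The main obstacle is the sharp boundary-layer analysis in (ii): identifying that the correct Gaussian cutoff scale for $|d|$ changes from $1/a^2$ in the bulk to the $s$-dependent $1/(a^2 s^2)$ near the endpoint, and carrying out the matched-asymptotic splitting at $s\sim a^{-2/3}$ so that exactly one power of $\log a$ emerges (and not a different exponent). Secondary technical issues are the uniform estimate $\sigma_t^2(x,x')\asymp|x-x'|$ up to the boundary, needed to justify dominated convergence in the diagonal, and producing matching lower bounds $J(a)\gtrsim\log(a)/a$ (respectively $\gtrsim 1/a$) by restricting the exponential to a set where it is bounded below by a positive constant.
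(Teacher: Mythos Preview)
Your plan is correct and follows essentially the same route as the paper: reduce to the stationary Gaussian density integral via the occupation-time identity, identify the drift $\mu(x,x')=\tfrac{a\sqrt2}{\pi^2}(\cos\pi x-\cos\pi x')$ and variance $\sigma^2(x,x')\asymp|x-x'|$, and then estimate the resulting double integral in sum/difference coordinates using the identity $\cos\pi x-\cos\pi x'=-2\sin\tfrac{\pi(x+x')}{2}\sin\tfrac{\pi(x-x')}{2}$ to extract the $1/a$ and $(\log a)/a$ scalings (the paper packages these steps as Lemmas~\ref{l-t-ident}--\ref{int-conv}, splitting the last integral at $h=|x-x'|\sim a^{-2}$ rather than at $s=x+x'\sim a^{-2/3}$ as you do, but the two splittings are equivalent). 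Two minor remarks: the paper obtains exact stationarity via a pinned-string construction (Lemma~\ref{lemma-shift}) rather than your OU-convergence-plus-Ces\`aro argument, and the closed stationary variance is $\sigma^2(x,x')=\tfrac12|x-x'|$, not $|x-x'|$; finally, both your sketch and the paper in fact establish only the upper bounds $\le C_1/a$ and $\le C_2(\log a)/a$, which is all that is used in Proposition~\ref{prop-part}.
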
 
\begin{proof} 
From \eqref{eq:m-form} it follows that for any continuous initial condition $u_0$, as $t\to\infty$,  $G_t(u_0)(x)$ converges to a constant uniformly in $x$. Since adding a constant does not change the radius of the solution, we can ignore the influence of the initial data and start with $u(0,\cdot)$ which is a pinned stationary distribution with respect to $\hat{P}$ (see e.g. equation (2.1) in \cite{MT02}).
Let $t_{0} \geq 0$ and $x_{0}\in [0,1]$. For $ t_0\leq t$ and 
$x\in[0,1]$ define 
\begin{equation} \label{eq:u-pinned}
\begin{aligned} 
u_{t_0,x_0}(t,x)&:=\int_{t_0}^{t}\int_{0}^{1}G_{t-r}(x,y)\widetilde {W}(dydr)  \\
&\quad+\int_{-\infty}^{t_0}\int_{0}^{1}\big[G_{t-r}(x,y)-G_{t_0-r}(x_0,y)\big]\widetilde{W}(dydr) \\
&\quad+a\int_{-\infty}^{0}\int_{0}^{1}\big[G_{t-r}(x,y)-G_{t_0-r}(x_0,y)\big]\varphi_1(y)dydr,
\end{aligned}
\end{equation}
where $\widetilde{W}$ is a space-time white noise.

We first study the solution $u$ to \eqref{eq:u-pinned} 
under a $\hat P_T^{(a)}$, which adds 
drift $a \varphi_{1}(y)$ to the noise $\widetilde{W}(dyds)$ for $s\in[0,T]$.

Therefore, we replace $\widetilde W(dyds)$ with 
$\hat{W}(dyds)+a\varphi_{1}(y)dyds$ for $s\in[0,T]$, $y\in[0,1]$. So from \eqref{eq:m-form} we get that $u_{t_0,x_0}$ satisfies  the following equation under 
$\hat P_T^{(a)}$, for $t_0\leq t \leq T$ and $x\in[0,1]$, 
\begin{equation} \label{eq:u-drift}
\begin{aligned} 
u_{t_0,x_0}(t,x)&:=\int_{t_0}^{t}\int_{0}^{1}G_{t-r}(x,y)\hat{W}(dydr)  \\
&\quad +a\int_{t_{0}}^{t}\int_{0}^{1}G_{t-r}(x,y)\varphi_1(y)dydr  \\
&\quad+\int_{-\infty}^{t_0}\int_{0}^{1}\big[G_{t-r}(x,y)-G_{t_0-r}(x_0,y)\big]\hat{W}(dydr)   \\
&\quad+a\int_{-\infty}^{t_{0}}\int_{0}^{1}\big[G_{t-r}(x,y)-G_{t_0-r}(x_0,y)\big]\varphi_1(y)dydr  \\
&=: (A)+a\cdot(B)+(C)+a\cdot(D).
\end{aligned}
\end{equation} 
Recall that $(\varphi_{n}, \lambda_{n})$ were defined in \eqref{phi-n}. We will use the $\mathbf{L}^2$-eigenfunction expansion of the heat kernel 
\begin{equation} \label{g-exp}
G_t(x,y)=\sum_{n=0}^{\infty}e^{-\lambda_nt}\varphi_n(x)\varphi_n(y).
\end{equation}
Since $\lambda_{n}$ is the eigenvalue corresponding to $\varphi_{n}$, we have for every $n \geq 1$,
\begin{equation} \label{hk-int}
\int_{0}^{1}\varphi_{n}(y)G_{t-s}(x,y)dy=e^{-\lambda_{n} (t-s)}\varphi_{n}(x)
 =\exp\left(- \pi^2 n^{2}(t-s)\right)\varphi_{n}(y).
\end{equation}
 
In the following lemma we verify that the integrals (C) and (D) converge. 
Note that (C) and (D) depend on $(t,t_{0},x,x_{0})$ so we often abbreviate 
$(C)(t,t_{0},x,x_{0})$ and $(D)(t,t_{0},x,x_{0})$. 

\begin{lemma} \label{lem-conCD} For any $T\geq 0$ the following holds:
\begin{itemize}  
 \item[\textbf{(i)}] 
\begin{equation*}
 \sup_{t_0\leq t \leq T}\sup_{x,x_{0}\in[0,1]} E[(C)
 (t,t_{0},x,x_{0})^{2}] < \infty, 
\end{equation*}
 \item[\textbf{(ii)}] 
\begin{equation*}
 \sup_{t_0\leq t \leq T}\sup_{x,x_{0}\in[0,1]}  |(D)(t,t_{0},x,x_{0})| <\infty .
\end{equation*}
\end{itemize} 
\end{lemma} 
The proof of Lemma \ref{lem-conCD} is postponed to Section \ref{pf-prop-string}. 
\\

In the following two lemmas, which will be proved in Section \ref{pf-prop-string}, we derive two essential properties of the pinned string. 

Assume now that $t_{0}$ and $x_{0}$ are fixed. For any $t>0$, let $\hat{g}_{t, x_1,x_2}$ be the density 
function for $u_{t_{0},x_{0}}(t,x_2)-u_{t_{0},x_{0}}(t,x_1)$ with respect to $\hat P_T^{(a)}$.
First, we reformulate the expected local time integral in terms of an integral over $\hat{g}_{t,x_1,x_2}(0)$.
 \begin{lemma} [Rephrasing the local time integral] \label{l-t-ident}
 For any $0\leq t\leq T$ we have 
\begin{equation} \label{eq:reduction-to-difference}
\hat{E}\left[\int_{-\infty}^{\infty}\ell_t(y)^2dy  \right]
= \int_{0}^{1}\int_{0}^{1}\hat{g}_{t, x_1,x_2}(0)dx_2dx_1.
\end{equation}
\end{lemma} 
Next we derive the shift invariance property of the pinned string.
\begin{lemma} [Shift invariance of the pinned string] \label{lemma-shift} 
Let $0\leq t_{0} \leq T$ and $x_{0}\in[0,1]$. Then under the measure $\hat P_T^{(a)}$ the random field 
\begin{equation*}
U_{t_{0},x_{0}}(t,x_{1},x_{2}) := u_{t_0,x_0}(t,x_{1})-u_{t_0,x_0}(t,x_2), \quad  t\in [0,T],\, x_1,x_{2}\in[0,1], 
\end{equation*}
is stationary in $t$. That is, for any $t_{0}<t_1<t_2 \leq T$ we have
\begin{align*}
\big(U_{t_{0},x_{0}}(t_{1},x_{1},x_{2})\big)_{x_1,x_{2}\in[0,1]} 
\stackrel{\mathcal{D}}{=} 
\big(U_{t_{0},x_{0}}(t_{2},x_{1},x_{2})\big)_{x_1,x_{2}\in[0,1]}.
\end{align*}
\end{lemma} 

From Lemmas \ref{l-t-ident} and \ref{lemma-shift}, it follows that in order to bound $I_1(T)$, we can restrict our discussion to the case where $t=t_0=0$ and $x_0=1/2$ in \eqref{eq:u-drift}. Then we have
\begin{equation} \label{pinned-0}
\begin{aligned}
u_{0,1/2}(0,x)
&= \int_{0}^{\infty}\int_{0}^{1}\big[G_{r}(x,y)-G_{r}(1/2,y)\big]\hat{W}(dydr)   \\
&\quad+a\int_{0}^{\infty}\int_{0}^{1}\big[G_{r}(x,y)-G_{r}(1/2,y)\big]\varphi_1(y)dydr  \\
&=: (E)(x)+a\cdot(F)(x).
\end{aligned}
\end{equation} 
Recall that $\{\ell_{t}(y)\}_{y\in\mathbf{R}}$ is the local time of 
$u_{0,1/2}(t,\cdot)$. From \eqref{z-beta-low} and Lemmas \ref{l-t-ident} and 
\ref{lemma-shift}, we therefore have  
\begin{equation} \label{i-1-g}
I_1(T)=T\hat{E}\left[\int_{-\infty}^{\infty}\ell_0(y)^2dy\right]= T \int_{0}^{1}\int_{0}^{1}\hat{g}_{0, x_1,x_2}(0)dx_2dx_1.
\end{equation}
From \eqref{i-1-g} we conclude that the scaling properties of $I_1(T)$ are given by $\hat{g}_{0, x_1,x_2}$.
 
\textbf{Notation.} 
In order to simplify the notation we write $\hat{g}_{x_1,x_2}$ instead of $\hat{g}_{0, x_1,x_2}$ for the rest of this section. We also use the notation $u(t,x)$ instead of $u_{0,1/2}(t,x)$ where there is no ambiguity. 

In the following lemma we derive some essential bounds on the second moment for the increments of $u_{0,1/2}$, in the drift-less case, that is when $a=0$ in \eqref{pinned-0}. Since $u_{0,1/2}$ is a Gaussian process, this bound also applies  to the variance of its increments for any $a>0$. 
\\

\begin{lemma} \label{lemma:2nd-mom}
Assume that  $a=0$ in \eqref{pinned-0}. Then there exist constants $C_{1},C_{2}  >0$ such that for all $x_{1},x_{2} \in [0,1]$ and $t\in [0,T]$, 
\begin{equation*}
 C_{1}|x_1-x_2|  \leq   E\left[\left(u(t,x_1)-u(t,x_2)\right)^2\right] \leq C_{2}|x_1-x_2|.
 \end{equation*}
\end{lemma} 
The proof of Lemma \ref{lemma:2nd-mom} is given in Section \ref{pf-int-conv}.

In order to study $\hat{g}_{x_1,x_2}$, we need analyze the drift term (F) in \eqref{pinned-0}. Since $\varphi_1(1/2)=0$ we get from \eqref{hk-int} that  
\begin{align*}
(F)(x) &= \int_{0}^{\infty}e^{-\lambda_1t}\left[\varphi_1(x)-\varphi_1(1/2)\right]dt \\
&= \int_{0}^{\infty}e^{-\lambda_1t}\varphi_1(x)dt   \\
&=\frac{\sqrt{2}}{\pi^{2}}\cos(\pi x),
\end{align*}
where we used \eqref{phi-n} in the last equality. 

Define 
$$
\mathcal D(x_{1},x_{2}):=(F)(x_{1}) -(F)(x_{2}), \quad  0\leq x_{1}, x_{2} \leq 1. 
$$
In the following lemma we derive a lower bound on $\mathcal D(x_{1},x_{2})$.  
\begin{lemma} \label{lem-drift} 
For all $0\leq x_{1} \leq x_{2} \leq 1$ we have 
$$
 \mathcal D(x_{1},x_{2}) \geq 
 \begin{cases}
           \frac{\sqrt{2}}{8\pi^{2}} (x_{2} -x_{1})(x_{2} +x_{1}), & \text{for }   x_{1}+x_{2} \leq 1,\\ 
         \frac{\sqrt{2}}{2\pi^{2}} (x_{2} -x_{1})\left(1-\frac{x_{2}+x_{1}}{2}\right), & \text{for }   1< x_{1}+x_{2} \leq 2. 
\end{cases}
$$
\end{lemma}
The proof of Lemma \ref{lem-drift} is given in Section \ref{pf-int-conv}. 

We define ${g}_{ x_1,x_2}$ as the density function for $u(0,x_2)-u(0,x_1)$ 
in \eqref{pinned-0} when $a=0$. From Lemmas \ref{lemma:2nd-mom} and 
\ref{lem-drift}  we get that there exist constants $C_{1},C_{2} >0$ such that 
\begin{equation} \label{g-bound} 
\begin{aligned} 
&\int_{0}^{1}\int_{0}^{1}\hat{g}_{x_1,x_2}(0)dx_2dx_1 
= 2\int_{0}^{1}\int_{x_1}^{1}\hat{g}_{x_1,x_2}(0)dx_2dx_1  \\
&= 2\int_{0}^{1}\int_{x_1}^{1}g_{x_1,x_2}( \mathcal D(x_{1},x_{2}) )dx_2dx_1  \\
&\leq C_{1}\int_{0}^{1}\int_{x_{1}}^{1}(x_{2}-x_{1})^{-1/2}
 \exp\left(-\frac{C_{2}a^2  \mathcal D(x_{1},x_{2})^{2}}{x_{2}-x_{1}}\right)dx_{2}dx_{1}  \\
&\leq C_{1}\int_{0}^{1}\int_{x_{1}}^{1}\mathbf{1}_{\{x_{1}+x_{2} \leq 1\}} (x_{2}-x_{1})^{-1/2} \\
&\hspace{2cm}  \times\exp\left(-C_{3}a^2(x_{1}+x_{2})^2(x_{2}-x_{1})\right)dx_{1}dx_{2}\\
&\quad +C_{1}\int_{0}^{1}\int_{x_{1}}^{1} \mathbf{1}_{\{x_{1}+x_{2}> 1\}} (x_{2}-x_{1})^{-1/2} \\
 &\hspace{2cm} \times
 \exp\left(-C_{4}a^2\left(1- \frac{x_{1}+x_{2}}{2}\right)^2(x_{2}-x_{1})\right)dx_{1}dx_{2}\\
 &=: C_{1}\big(\mathcal I_{1}(a)+ \mathcal I_{2}(a)\big).
\end{aligned}
\end{equation} 
The result then follows from following lemma, which is proved in Section \ref{pf-int-conv}.
 \begin{lemma}  \label{int-conv}
 There exist constants $C_{1},C_{2}>0$ not depending on $a$, such 
that for $i=1,2$ we have: 
 \begin{itemize} 
 \item[\textbf{(i)}] for all $0<a\leq 1$,
 $$\mathcal I_{i}(a) \leq \frac{C_{1}}{a},$$ 
 \item[\textbf{(ii)}] for all $a>1$, 
 $$\mathcal I_{i}(a) \leq C_{2}\frac{\log a}{a}.$$
 \end{itemize} 
 \end{lemma} 
\end{proof}

Next we analyze $I_{2}(T)$ from (\ref{z-beta-low}). 
\begin{lemma}  \label{lemma-i-2}
For any $J,T,a > 0$ we have 
\begin{equation*}
I_{2}(T)  = \frac{a^{2}T}{2}. 
\end{equation*}
\end{lemma}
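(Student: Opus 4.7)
The plan is to compute the expectation directly from the Radon–Nikodym derivative in \eqref{q-hat}, using Girsanov's theorem to understand the law of the white noise $W$ under $\hat P_T^{(a)}$.

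First I would take the logarithm of \eqref{q-hat} to get
\begin{equation*}
\log \frac{d\hat P_T^{(a)}}{dP_T} = \int_{0}^{T}\int_{0}^{1} a\varphi_1(x)\, W(dxdt) - \frac{a^2 T}{2},
\end{equation*}
where I have already used \eqref{phi-norm} to evaluate the deterministic part. So the task reduces to showing
\begin{equation*}
\hat E\!\left[\int_{0}^{T}\int_{0}^{1} a\varphi_1(x)\, W(dxdt)\right] = a^2 T.
\end{equation*}

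Next I would invoke the Cameron–Martin/Girsanov theorem for space-time white noise (the same statement in Dawson \cite{Daw78} that gave \eqref{q-hat}), which tells us that under $\hat P_T^{(a)}$ the random measure
\begin{equation*}
\hat W(dxdt) := W(dxdt) - a\varphi_1(x)\,dxdt
\end{equation*}
is a space-time white noise on $[0,T]\times[0,1]$. Substituting $W(dxdt) = \hat W(dxdt) + a\varphi_1(x)\,dxdt$ into the stochastic integral splits it into a $\hat W$-stochastic integral with mean zero under $\hat P_T^{(a)}$ and a deterministic Lebesgue integral:
\begin{equation*}
\hat E\!\left[\int_{0}^{T}\!\int_{0}^{1} a\varphi_1(x)\, W(dxdt)\right] = 0 + \int_{0}^{T}\!\int_{0}^{1} a^2 \varphi_1^2(x)\,dx\,dt = a^2 T,
\end{equation*}
again using \eqref{phi-norm}. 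Combining these two displays gives $I_2(T) = a^2 T - a^2 T/2 = a^2 T/2$, as required.

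There is no real obstacle here: the statement is essentially the standard formula for the relative entropy between two Gaussian measures that differ by a mean shift in the Cameron–Martin direction $a\varphi_1$, whose squared $\mathbf L^2([0,T]\times[0,1])$-norm is exactly $a^2 T$ by \eqref{phi-norm}. The only point that needs a line of justification is the vanishing of the $\hat W$-stochastic integral's expectation under $\hat P_T^{(a)}$, which follows from the fact that $\hat W$ is a genuine white noise under that measure and that $a\varphi_1 \in \mathbf L^2([0,T]\times[0,1])$.
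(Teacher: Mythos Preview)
Your proof is correct. Both you and the paper reduce the problem to showing $\hat E\big[\int_0^T\int_0^1 a\varphi_1(x)\,W(dxdt)\big]=a^2T$, but you compute this expectation differently. The paper changes back to the original measure $P_T$, writes $\hat E[W(T,a\varphi_1)]=\frac{a}{\hat\zeta(T,a)}\frac{d}{da}E[\exp(aW(T,\varphi_1))]$, and evaluates this via the moment generating function of the Gaussian variable $W(T,\varphi_1)\sim N(0,T)$. You instead stay under $\hat P_T^{(a)}$ and use Girsanov to write $W=\hat W+a\varphi_1\,dxdt$, so the stochastic part has mean zero and the drift contributes $a^2T$ directly. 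Your route is a bit more conceptual (it is exactly the relative-entropy interpretation you mention at the end) and avoids the MGF computation; the paper's route is slightly more self-contained in that it only uses the explicit density \eqref{q-hat} and a one-dimensional Gaussian calculation, without invoking the Girsanov statement about $\hat W$ being a white noise under $\hat P_T^{(a)}$.
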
 

\begin{proof} 
From \eqref{zeta-hat} and \eqref{z-beta-low} it follows that
\begin{equation} \label{c-der} 
\begin{aligned}
I_{2}(T) &= \hat{E}\left[\log \frac{d\hat P_T^{(a)}}{dP_T}\right] \\
&=\hat{E}\left[\int_{0}^{T}\int_{0}^{J}a\varphi_{1}(x)W(dxdt)\right]  
   -\log\hat{\zeta}(T,a) \\
&=\hat{E}\left[W(T,a \varphi_1)\right] 
   -\frac{a^2T}{2},
\end{aligned}
\end{equation} 
where we write
\begin{equation*}
W(T,a \varphi_1)=\int_{0}^{T}\int_{0}^{J}a\varphi_{1}(x)W(dxdt)
\end{equation*}
Using \eqref{q-hat} we get 
\begin{equation} \label{i-2-1}
\begin{aligned}
\hat{E}\left[W(T,a\varphi_1)\right]  
&=E\left[W(T,a\varphi_1)\frac{d\hat P_T^{(a)}}{dP_T}\right]   \\
&=\frac{a}{\hat{\zeta}(T,a)}
 E\left[W(T,\varphi_1)\exp\left(aW(T,\varphi_1)\right)\right]    \\
&=\frac{a}{\hat{\zeta}(T,a)} \frac{d}{da}
 E\left[ \exp\left(aW(T,\varphi_1)\right)\right].
\end{aligned}
\end{equation} 

In the preceding line we have the derivative of a moment 
generating function.  Let
\begin{equation*}
X= W(T,\varphi_1),
\end{equation*}
and
\begin{equation*}
\psi(a)=E\left[ \exp\left(aX\right)\right]. 
\end{equation*}
Note that $X\sim N(0,\sigma^2)$ and from \eqref{phi-norm} we conclude that
\begin{equation*}
\sigma^2=\int_{0}^{T}\int_{0}^{1}\varphi_{1}(y)^2dy = T.
\end{equation*}
It follows that 
\begin{align*}
\psi(a)&=\exp\left(\frac{a^2T}{2}\right), \\
\frac{d}{da}\psi(a)&=aT\exp\left(\frac{a^2T}{2}\right). 
\end{align*}
Using this moment generating function computation in \eqref{i-2-1} 
and using \eqref{zeta-hat}, we get
\begin{equation} \label{eq:W}
 \hat{E}\left[W(T,a\varphi_1)\right]
= \frac{1}{\hat{\zeta}(T,a)} a^2T
 \exp\left(\frac{a^2T}{2}\right)
=a^2T.  
\end{equation}
Pulling together \eqref{c-der} and \eqref{eq:W}, we have
\begin{equation*}
I_{2}(T)  = \hat{E}\left[W(T,a\varphi_1)\right]
     -\log\hat{\zeta}(T,a)
= a^{2}T - \frac{a^2}{2}T
= \frac{1}{2}a^{2}T.
\end{equation*}
\end{proof} 

Now we are ready to prove Proposition \ref{prop-part}.

\begin{proof} [Proof of Proposition \ref{prop-part}]

We first prove the proposition for $J=1$, that is for a solution $v$ to \eqref{eq:edwards-wilkinson} on $x\in[0,1]$.

From \eqref{z-beta-low}, Proposition \ref{lemma-i-1}(i), and Lemma \ref{lemma-i-2} we get for any $0\leq \beta \leq e$ and $0\leq a\leq 1$, 
\begin{align*}
\liminf_{T\rightarrow \infty}\frac{1}{T}\log Z_{T,1,\beta}  
&\geq \lim_{T\rightarrow \infty} \frac{1}{T} \big(-\beta I_{1}(T) - I_{2}(T) \big) \\ 
&= -C\frac{\beta}{a}- \frac{a^2}{2}.
\end{align*}
Next we choose $a=(\beta/e)^{1/3}$, and get that there exists a constant $\tilde C>0$ such that 
\begin{equation} \label{z1-lim}
\liminf_{T\rightarrow \infty} \frac{1}{T}\log Z_{T,1,\beta}  
\geq - \tilde C\beta^{2/3} . 
 \end{equation}
 
Again from \eqref{z-beta-low}, Proposition \ref{lemma-i-1}(ii) and Lemma \ref{lemma-i-2} we get for any $\beta > e$ and $a>1$, 
\begin{align*}
\liminf_{T\rightarrow \infty}\frac{1}{T}\log Z_{T,1,\beta}  
&\geq \lim_{T\rightarrow \infty} \frac{1}{T} \big(-\beta I_{1}(T) - I_{2}(T) \big) \\ 
&= -C\frac{\beta \log a}{a}- \frac{a^{2}}{2}.
\end{align*}
As before we choose $a=(\beta/e)^{1/3}$, and get that there exists a constant $\hat C>0$ such that 
\begin{equation} \label{z1-lim-log}
\liminf_{T\rightarrow \infty} \frac{1}{T}\log Z_{T,1,\beta}  
\geq - \hat  C\beta^{2/3} \log \beta. 
 \end{equation}

Let $J>0$ and let $u$ be the solution to \eqref{eq:edwards-wilkinson} on $x\in[0,J]$. Now we use Lemma \ref{lem-scale}(ii) and \eqref{z-func} to get  $Z^{u}_{T,J,\beta } =Z^{v}_{TJ^{-2},1,\beta J^{7/2} }$. Together with \eqref{z1-lim} we have for $0\leq \beta J^{7/2}  \leq e $,
 \begin{equation*}
 \begin{aligned} 
\liminf_{T\rightarrow \infty} \frac{1}{T}\log  Z^{u}_{T,J,\beta } &=J^{-2}  \liminf_{T\rightarrow \infty} \frac{1}{TJ^{-2}}\log Z^{v}_{TJ^{-2},1,\beta J^{7/2}}\\
&\geq - J^{-2} \beta^{2/3} J^{7/3}C\\
&\geq -   \beta^{2/3} J^{1/3}  C.
\end{aligned} 
 \end{equation*}

Similarly using \eqref{z1-lim-log} we get for all $\beta J^{7/2} \geq e$, 
 \begin{equation*}
 \begin{aligned} 
\liminf_{T\rightarrow \infty} \frac{1}{T}\log  Z^{u}_{T,J,\beta } &=J^{-2}  \liminf_{T\rightarrow \infty} \frac{1}{TJ^{-2}}\log Z^{v}_{TJ^{-2},1,\beta J^{7/2}}\\
&\geq - CJ^{-2} \beta^{2/3} J^{7/3}\log (\beta J^{2/7}).
\end{aligned} 
 \end{equation*}
By the definition of $h(\beta, J)$ from \eqref{eq:h-def}, the result follows. 
\end{proof}

\section{Small distance tail estimate} \label{sec-small}
In this section we derive an upper bound for the tail behaviour of $R(T,J)$,
which is given in the following proposition. Recall that $Q_{T}$ and 
$A^{(1)}_{T}$ were defined in \eqref{eq:def-Q} and \eqref{events},
respectively, and that $A^{(1)}_{T}$ depends on $\varepsilon_0$.
\begin{prop}  \label{prop-small}
We can choose $\varepsilon_0>0$ small enough so that
\begin{equation*}
\lim_{T\to\infty}Q_{T}\big(A^{(1)}_{T}\big)=0.
\end{equation*}
\end{prop}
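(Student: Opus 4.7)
The strategy is the one sketched in the outline: write
\[
Q_T\bigl(A^{(1)}_T\bigr)=\frac{1}{Z_T}E^{P_T}\!\bigl[\mathcal{E}_T\mathbf{1}_{A^{(1)}_T}\bigr],
\]
use Proposition \ref{prop-part} to bound $Z_T$ from below, and bound the numerator from above by producing a \emph{deterministic} lower bound on $\int_0^T\!\int\ell_t(y)^2\,dy\,dt$ that is valid on the event $A^{(1)}_T$. The point is that a small radius forces the polymer to be spatially concentrated, which forces the local time (whose total mass is always $J$) to pile up, hence $\mathcal{E}_T$ to be tiny.

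The heart of the argument is a two-step pathwise estimate. First, viewing $\mu_t:=J^{-1}\ell_t(\cdot)$ as a probability measure, the change of variables $\int(y-\bar u(t))^2\ell_t(y)\,dy=\int_0^J(u(t,x)-\bar u(t))^2\,dx=J\theta_u(t,J)^2$ identifies its variance as $\theta_u(t,J)^2$. Chebyshev's inequality then places at least $J/2$ of the mass of $\ell_t$ inside an interval $I_t$ of length $2\sqrt{2}\,\theta_u(t,J)$ around $\bar u(t)$, and Cauchy--Schwarz on $I_t$ gives $(J/2)^2\le|I_t|\int\ell_t^2$, i.e.
\[
\int_{\mathbf{R}}\ell_t(y)^2\,dy\;\ge\;\frac{c_0 J^2}{\theta_u(t,J)}
\]
for a universal $c_0>0$. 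Second, two further Cauchy--Schwarz estimates,
\[
\int_0^T\theta_u(t,J)\,dt\le T\,R(T,J),\qquad T^2\le\Bigl(\int_0^T\theta_u(t,J)^{-1}dt\Bigr)\Bigl(\int_0^T\theta_u(t,J)\,dt\Bigr),
\]
combine with the previous display to give
\[
\int_0^T\!\int_{\mathbf{R}}\ell_t(y)^2\,dy\,dt\;\ge\;\frac{c_0 J^2 T}{R(T,J)}.
\]

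On $A^{(1)}_T$ this yields $\beta\int_0^T\!\int\ell_t^2\,dy\,dt\ge (c_0/\varepsilon_0)\,h(\beta,J)\beta^{2/3}J^{1/3}T$, so
\[
E^{P_T}\!\bigl[\mathcal{E}_T\mathbf{1}_{A^{(1)}_T}\bigr]\;\le\;\exp\Bigl(-\tfrac{c_0}{\varepsilon_0}\,h(\beta,J)\beta^{2/3}J^{1/3}T\Bigr).
\]
Dividing by the bound $Z_T\ge\exp\bigl(-C\,h(\beta,J)\beta^{2/3}J^{1/3}T(1+o(1))\bigr)$ from Proposition \ref{prop-part},
\[
Q_T\bigl(A^{(1)}_T\bigr)\;\le\;\exp\Bigl(-\bigl(\tfrac{c_0}{\varepsilon_0}-C\bigr)h(\beta,J)\beta^{2/3}J^{1/3}T+o(T)\Bigr),
\]
which vanishes as $T\to\infty$ whenever $\varepsilon_0<c_0/C$. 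Note that the same argument handles both regimes of Theorem \ref{th:main} simultaneously: the $h(\beta,J)^{-1}$ built into the definition of $A^{(1)}_T$ cancels exactly against the $h(\beta,J)$ in Proposition \ref{prop-part}, and in regime (ii) both factors simply equal $1$.

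The main obstacle is the first pathwise step: one must turn ``small time-averaged spatial variance'' into ``large local-time $L^2$ norm'' with an explicit constant $c_0$, because the final inequality requires $c_0/\varepsilon_0$ to beat the absolute constant $C$ supplied by Proposition \ref{prop-part}. The Chebyshev+Cauchy--Schwarz combination above is the cleanest route; a subtlety worth spelling out is that $\theta_u(t,J)>0$ almost surely (the spatial profile is a Brownian motion plus a smooth function, hence non-constant), so the bound $\int\ell_t^2\ge c_0 J^2/\theta_u(t,J)$ and the reciprocal integrals are well defined.
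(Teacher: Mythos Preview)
Your proof is correct and follows the same overall strategy as the paper: produce a deterministic lower bound on $\int_0^T\!\int\ell_t^2\,dy\,dt$ on the event $A^{(1)}_T$ via a Chebyshev-type concentration argument, then compare with the partition-function bound of Proposition~\ref{prop-part}.

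The execution differs in a way worth noting. The paper first reduces to $J=1$ by scaling, then uses two Markov inequalities (once in $t$, once in $x$) to isolate a set of ``good'' times of measure $\ge T/2$ on which the local time has mass $\ge 1/2$ on a short interval, and finally applies Jensen's inequality there to obtain $\int_0^T\!\int\ell_t^2\ge T/(32K)$. You instead work directly for general $J$ and obtain a \emph{pointwise-in-$t$} bound $\int\ell_t^2\ge c_0 J^2/\theta_u(t,J)$ via Chebyshev on the occupation measure plus Cauchy--Schwarz on the interval $I_t$; two further Cauchy--Schwarz inequalities then turn $\int_0^T\theta_u^{-1}\,dt$ into $T/R(T,J)$. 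Your route is slightly tighter (no loss of a factor from passing to a good subset of times) and avoids the scaling detour, at the cost of having to justify that $\theta_u(t,J)>0$ almost surely so that $\theta_u^{-1}$ is well defined, which you correctly flag. Either way the final comparison $c_0/\varepsilon_0>C$ is the same.
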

\begin{proof} 
We first prove the proposition for $J=1$, that is, for a solution $v$ to \eqref{eq:edwards-wilkinson} on $x\in[0,1]$.

We first define the event $A_{K,T} = \{R(T,1) \leq K\}$. Recall that $\theta_{v}$ was defined in \eqref{theta}.  From \eqref{eq:H-def} we get  
\begin{equation*}
R(T,1)^{2} = \frac{1}{T}\int_{0}^{T} \theta_{v}(t,1)^{2}dt.  
\end{equation*}
Hence we have on $A_{K,T}$ 
\begin{equation} \label{kj1} 
|\{t\in[0,T]: \, \theta_{v}(t,1)^{2}  \leq 2K^{2} \} | \geq T/2, 
\end{equation} 
where $|\cdot|$ denotes the Lebesgue measure. From \eqref{theta} it follows that if $ \theta_{v}(t,1)^{2}  \leq 2K^{2}$ then 
\begin{equation} \label{kj2}  
| \{x\in[0,1] : v(t,x) \in [\bar v(t) - 2K , \bar v(t) + 2K] \}| \geq 1/2. 
\end{equation} 
Let $d_{t}^{\pm} = \bar v(t) \pm2 K$, and note that $d_{t}^{+} - d_{t}^{-} = 4K$.  Then from \eqref{l-time}, \eqref{kj1} and \eqref{kj2} it follows that on $A_{K,T}$ we have 
\begin{equation}  \label{kj3}
\left|\left\{t\in[0,T]: \, \int_{d^{-}_{k}}^{d_{k}^{+} } \ell^{v}_{t}(x) dx
\geq 1/2 \right\}\right | \geq T/2.
\end{equation} 
Using Jensen's inequality and \eqref{kj3}, we get on $A_{T,K}$
\begin{equation}  \label{l-low}
\begin{aligned}
\int_{0}^{T}\int_{-\infty}^{\infty}\ell^{v}_t(y)^2dy dt
&\geq 4K \int_{-\infty}^{\infty}\left(\int_{d^{-}_{K}}^{d^{+}_{K}}\ell_t(y)^2\frac{dy}{4 K} \right)dt \\
&\geq 4 K \int_{0}^{T}
 \left(\int_{d^{-}_{K}}^{d^{+}_{K}}\ell_t(y)\frac{dy}{4 K}\right)^2 dt \\
&\geq \frac{T}{32 K}.
\end{aligned}
\end{equation} 
Note that on the event $A^{(1)}_{T}$ in \eqref{events} we have 
\begin{equation}\label{rec-a-1}
R(T,1) <\varepsilon_{0}h(\beta,1)^{-1} \beta^{1/3}. 
\end{equation}
Hence from \eqref{l-low} we get that on $A^{(1)}_{T}$, 
\begin{align*}
\int_{0}^{T}\int_{-\infty}^{\infty}\ell_t(x)^2dxdt \geq \frac{T}{32\varepsilon_{0}h(\beta,1)^{-1}\beta^{1/3}}.
\end{align*}

Recall the definition of $\mathcal{E}_{T}$ in \eqref{z-func}. 
Thus for any $\varepsilon_{0}>0$ we have 
\begin{equation} \label{p-small}
E\Big[\mathcal{E}_{T}\mathbf{1}\big(R(T,1)<\varepsilon_{0}h(\beta,1)^{-1}\beta^{1/3}\big)\Big] 
\leq \exp\left(-\beta \frac{T}{32\varepsilon_{0}h(\beta,1)^{-1}\beta^
{1/3}}\right). 
\end{equation}

Using Proposition \ref{prop-part} with $J=1$ and \eqref{p-small} we get 
\begin{equation} \label{smal-fin}
\begin{aligned} 
\lim_{T\rightarrow \infty}  \frac{1}{T}&\log 
 Q_{T}\big(R(T,1)<\varepsilon_{0}h(\beta,1)^{-1}\beta^{1/3}\big)  \\
& \leq \lim_{T\rightarrow \infty} \frac{1}{T} \log E
 \Big[\mathcal{E}_{T}
 \mathbf{1}\left(R(T,1)<\varepsilon_{0}h(\beta,1)^{-1}\beta^{1/3}\right)\Big] \\
&\qquad -\liminf_{T\rightarrow \infty}\frac{1}{T}\log Z_{T} \\
&\leq  -\beta^{2/3}h(\beta,1)\left(\frac{1}{32\varepsilon_{0}} - C\right). 
\end{aligned} 
\end{equation}
Hence by choosing $\varepsilon_{0}$ small enough we get the result for $J=1$. 

Let $J>0$ and let $u$ be the solution to \eqref{eq:edwards-wilkinson} on $x\in[0,J]$. Define 
\begin{equation*}
\beta_{u}=J^{-7/2}\beta_v. 
\end{equation*}
Now we use Lemma \ref{lem-scale}(ii) and (iii) together with \eqref{smal-fin} and 
\begin{equation} \label{h-scale} 
h(\beta_{u},J) = h(\beta_{v},1), 
\end{equation} 
to get 
\begin{equation}  \label{scale_trans}
\begin{aligned} 
&\frac{1}{T}Q_{T,J,\beta_{u}}\big( R_u(T,J)\leq \varepsilon_0h(\beta_{u},J)^{-1}\beta_u^{1/3}J^{5/3}\big) \\
&=\frac{1}{T}Q_{TJ^{-2},1,\beta_{v}}\big( J^{1/2}R_v(TJ^{-2},1)\leq\varepsilon_0h(\beta_{v},1)^{-1} J^{-7/6}\beta_v^{1/3}J^{5/3}\big) \\
&=J^{-2}\frac{1}{TJ^{-2}} Q_{TJ^{-2},1,\beta_{v}}\big(  R_v(TJ^{-2},1)\leq \varepsilon_0h(\beta_{v},1)^{-1} \beta_{v}^{1/3}\big).
\end{aligned}
\end{equation}   
It follows that 
\begin{align*} 
\lim_{T\rightarrow \infty }\frac{1}{T}&Q_{T,J,\beta_{u}}\big( R_u(T,J)\leq \varepsilon_0h(\beta_{u},J)^{-1} \beta_u^{1/3}J^{5/3}\big) \\
&\geq   -J^{-2}\beta_{v}^{2/3}h(\beta_{u},J)\left(\frac{1}{32\varepsilon_{0}} -C\right) \\
& =   -\beta_{u}^{2/3}J^{1/3}h(\beta_{u},J)\left(\frac{1}{32\varepsilon_{0}} -C\right).
\end{align*}  
Hence by choosing $\varepsilon_{0}$ small enough we get the result for any $J>0$. 
\end{proof}

\section{Large distance tail estimate}  \label{sec-large}
In this section we derive an upper bound for tail probability of
$R(T,J)$, which is given in the following proposition. Recall that 
$Q_{T}$ and $A^{(2)}_{T}$ were defined in \eqref{eq:def-Q} 
and \eqref{events}, respectively, and that $K_2$ was a constant appearing in
the definition of $A_2$.
\begin{prop}  \label{prop-large}
We can choose $K_2$ so large that 
\begin{equation*}
\lim_{T\to\infty}Q_{T}\big(A^{(2)}_{T}\big)=0.
\end{equation*}
\end{prop}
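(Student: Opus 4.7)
The strategy is analogous to the proof of Proposition \ref{prop-small}: we bound $E^{P_T}[\mathcal{E}_T \mathbf{1}_{A_T^{(2)}}]$ from above and divide by the partition-function lower bound of Proposition \ref{prop-part}. Because $\mathcal{E}_T \leq 1$, the crude inequality $E[\mathcal{E}_T \mathbf{1}_{A_T^{(2)}}] \leq P(A_T^{(2)})$ reduces the task to an upper-tail bound on the radius under $P$. By Lemma \ref{lem-scale} we may rescale to $J=1$ with $\tilde\beta = \beta J^{7/2}$ and $\tilde T = J^{-2}T$, so that it suffices to produce, for the scaled process $v$ on $[0,1]$, an estimate of the form $P(R_v(T,1) > M) \leq \exp(-T\Psi(M))$, where $M$ is the rescaled form of $K_2\beta^{1/3}J^{5/3}$ and $\Psi(M)$ is large enough relative to the rate $Ch(\tilde\beta,1)\tilde\beta^{2/3}$ furnished by Proposition \ref{prop-part}.

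I would obtain such a bound through the spectral decomposition $v(t,x)-\bar v(t) = \sum_{n\geq 1} X_t^{(n)}\varphi_n(x)$ of \eqref{n-decom}-\eqref{ou-n}, whose coefficients are independent Ornstein-Uhlenbeck processes with mean-reversion rates $\lambda_n = n^2\pi^2$. Since $G_t(u_0)$ tends uniformly to a constant (as already used in Proposition \ref{lemma-i-1}), the transient effect of the initial datum is negligible and Parseval gives $R_v(T,1)^2 = \frac{1}{T}\sum_{n\geq 1}\int_0^T (X_t^{(n)})^2\,dt + o(1)$ as $T\to\infty$. Exponential Markov together with independence then yields
\begin{equation*}
P\bigl(R_v(T,1)^2 > M^2\bigr) \leq e^{-\alpha T M^2}\prod_{n\geq 1} E\!\left[\exp\!\left(\alpha\int_0^T (X_t^{(n)})^2\,dt\right)\right]
\end{equation*}
for every admissible $\alpha > 0$. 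The classical Feynman-Kac/Riccati calculation for an integrated squared Ornstein-Uhlenbeck process gives $\log E[\exp(\alpha\int_0^T(X_t^{(n)})^2 dt)] = T\Lambda_n(\alpha) + O(1)$ with $\Lambda_n(\alpha) := \frac{1}{2}(\lambda_n - \sqrt{\lambda_n^2 - 2\alpha})$, valid for $\alpha < \lambda_n^2/2$. Since $\Lambda_n(\alpha) \leq \alpha/\lambda_n$ whenever $\alpha \leq \lambda_n^2/4$, the series $\sum_{n\geq 1}\Lambda_n(\alpha)$ converges uniformly for $\alpha$ strictly below the spectral gap $\lambda_1^2/2 = \pi^4/2$; fixing, say, $\alpha = \pi^4/4$ then produces the universal estimate
\begin{equation*}
\frac{1}{T}\log P\bigl(R_v(T,1) > M\bigr) \leq -cM^2 + C_0
\end{equation*}
with constants $c, C_0 > 0$ independent of $\beta$ and $J$.

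Combining this with Proposition \ref{prop-part} gives $\limsup_{T\to\infty}\frac{1}{T}\log Q_T(A_T^{(2)}) \leq -cM^2 + C_0 + Ch(\tilde\beta,1)\tilde\beta^{2/3}$. The definition of $K_2$ in \eqref{k2-def} is calibrated precisely so that $cM^2$ dominates the other two terms once $K_0$ is taken large enough (uniformly in $\beta, J$): in the regime $\tilde\beta \geq e$, the factor $\tilde\beta^{1/3}h(\tilde\beta,1)^{1/2}$ built into $K_2$ makes $M^2 \propto K_0^2 h(\tilde\beta,1)\tilde\beta^{4/3}$, which beats $h(\tilde\beta,1)\tilde\beta^{2/3}$ for large $K_0$; in the regime $\tilde\beta < e$ with $h\equiv 1$, the remaining terms are bounded, so again a large $K_0$ suffices. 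Sending $T\to\infty$ then yields the proposition. The chief technical obstacle is maintaining the $M^2$ scaling uniformly in $M$: this forces $\alpha$ up near the spectral gap $\pi^4/2$ while requiring the Fourier product to remain summable, which in turn rests on the quadratic growth $\lambda_n \sim n^2$ of the Neumann eigenvalues.
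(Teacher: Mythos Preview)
Your argument is correct and follows a genuinely different route from the paper's. The paper also expands $R_v(T,1)^2$ along the Neumann eigenbasis into $\sum_{n\ge 1}\frac{1}{T}S_T^{(n)}$, but then proceeds by a \emph{union bound}: it allocates $c_0K_2^2n^{-2}$ to the $n$th mode and invokes the large-deviation principle of Bercu--Rouault (Lemma \ref{lem:LD-square}) for each Ornstein--Uhlenbeck component separately. To make the LDP rates uniform in $n$ it uses a scaling trick (Lemma \ref{lem:uniform-large-dev}) that reduces every $X^{(n)}$ to a time-change of a single OU process with $\gamma=1$. Your approach instead keeps the modes together, applies exponential Markov once, and exploits independence to factor the Laplace transform as a product $\prod_n E[\exp(\alpha S_T^{(n)})]$, evaluated explicitly via the Riccati formula.

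What each buys: your route is more self-contained (no external LDP citation) and yields the inequality $\frac{1}{T}\log P(R_v(T,1)>M)\le -\alpha M^2 + C_0(\alpha)$ directly, with the summability of $\sum_n \Lambda_n(\alpha)\le \alpha\sum_n\lambda_n^{-1}$ following from $\lambda_n\sim n^2$. The paper's union bound is cruder but makes the per-mode rate $I(c_n)\ge \tfrac12 n^2K_2^2$ transparent. You also make explicit the comparison with the partition-function lower bound from Proposition \ref{prop-part}, which the paper's write-up leaves somewhat implicit. One small remark: your closing comment overstates the ``spectral-gap'' tension---any fixed $\alpha\in(0,\lambda_1^2/2)$ already gives a quadratic rate $-\alpha M^2$, and since $K_0$ is free the matching with $Ch(\tilde\beta,1)\tilde\beta^{2/3}$ is then automatic; the only genuine constraint is the convergence of $\sum_n\Lambda_n(\alpha)$.
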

Before we start  with the proof of Proposition \ref{prop-large} we introduce 
the following large deviation result.  Let $\gamma >0$ and define 
$(X_t)_{t\geq 0}$ to be an Ornstein-Uhlenbeck process satisfying
\begin{equation} \label{eq:ON-process}
\begin{split}
dX_t&=-\gamma X_tdt+dW_t,  \\
X_0&=0
\end{split}
\end{equation} 
and let
\begin{equation*}
\mathcal S_T:=\int_{0}^{T}X_t^2dt.  
\end{equation*}
Lemma 3.1 of Bercu and Rouault \cite{BR01}, which relies on Bryc and 
Dembo \cite{Bryc:1997}, states the following.  
\begin{lemma} \label{lem:LD-square}
$T^{-1}\mathcal S_T$ satisfies a large deviation principle with good 
rate function
\begin{equation} 
\label{rate-f}
I(c)=
\begin{cases}
\frac{(2\gamma c-1)^2}{8c}  & \text{if $c>0$},  \\
+\infty & \text{otherwise}.
\end{cases}
\end{equation}
\end{lemma}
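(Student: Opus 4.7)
The plan is to invoke the G\"artner--Ellis theorem after an explicit computation of the scaled cumulant generating function of $\mathcal S_T$. Concretely, I would set
\[
\Lambda_T(\lambda) := \frac{1}{T}\log E\!\left[\exp(\lambda \mathcal S_T)\right], \qquad \Lambda(\lambda) := \lim_{T\to\infty}\Lambda_T(\lambda),
\]
and show that $\Lambda(\lambda) = \tfrac12\bigl(\gamma-\sqrt{\gamma^2-2\lambda}\bigr)$ for $\lambda<\gamma^2/2$, with $\Lambda(\lambda)=+\infty$ for $\lambda>\gamma^2/2$. The cleanest route is via Girsanov: the exponential tilt by $\exp(\lambda\mathcal S_T)$ corresponds, after completing the square, to changing the drift coefficient of $X$ from $-\gamma$ to $-\sqrt{\gamma^2-2\lambda}$, plus a quadratic boundary term coming from It\^o's formula applied to a Riccati solution $\alpha_T(t)$ that pins the terminal value. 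Combining these pieces yields a Gaussian expectation that one evaluates in closed form; extracting the $T\to\infty$ limit gives the expression for $\Lambda$. An equivalent path is via the Karhunen--Lo\`eve expansion of $\mathcal S_T$, diagonalizing the associated quadratic form and recognizing $E[\exp(\lambda\mathcal S_T)]$ as a Fredholm determinant whose large-$T$ asymptotics are given by the same formula.

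Next I would verify the Gärtner--Ellis hypotheses: $\Lambda$ is convex and lower semicontinuous, finite and $C^\infty$ on the open interval $D:=\{\lambda<\gamma^2/2\}$, with $0\in\mathrm{int}(D)$. Essential smoothness requires checking that $\Lambda'(\lambda)=(2\sqrt{\gamma^2-2\lambda})^{-1}\to+\infty$ as $\lambda\uparrow\gamma^2/2$, which is immediate from the explicit formula. Together these properties permit the full LDP conclusion (upper and lower bounds) for $T^{-1}\mathcal S_T$ with rate function $\Lambda^\ast$.

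Finally I would compute the Legendre transform $I(c)=\sup_\lambda[\lambda c-\Lambda(\lambda)]$. Stationarity gives
\[
c = \Lambda'(\lambda) = \frac{1}{2\sqrt{\gamma^2-2\lambda}} \ \Longleftrightarrow\ \lambda^\ast(c) = \frac{\gamma^2}{2}-\frac{1}{8c^2},
\]
valid precisely for $c>0$ (since $\Lambda'$ maps $D$ onto $(0,\infty)$). Substituting and simplifying,
\[
I(c) = c\!\left(\frac{\gamma^2}{2}-\frac{1}{8c^2}\right)-\frac{1}{2}\!\left(\gamma-\frac{1}{2c}\right) = \frac{4\gamma^2c^2-4\gamma c+1}{8c} = \frac{(2\gamma c-1)^2}{8c}.
\]
For $c\le 0$ the supremum is $+\infty$. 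Goodness of $I$ (compactness of sublevel sets) follows because $I(c)\to+\infty$ as $c\downarrow 0$ and as $c\to+\infty$.

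The main obstacle is Step 1: the computation of $\Lambda(\lambda)$ uniformly in $T$, including the sharp identification of the critical threshold $\lambda=\gamma^2/2$. Handling this requires either careful bookkeeping of the boundary terms in the Riccati/Girsanov change of measure (the $\tfrac12\log$ of a determinant that converges to a constant divided by $T$), or justifying term-by-term convergence of the Fredholm product in the Karhunen--Lo\`eve approach. Once $\Lambda$ is in hand with steepness at $\gamma^2/2$, the remaining steps are formal: convexity, Legendre duality, and the abstract G\"artner--Ellis machinery supply the LDP and the explicit rate function \eqref{rate-f}.
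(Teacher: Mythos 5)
Your proposal is correct, but note that the paper does not prove this lemma at all: it is quoted verbatim from Lemma 3.1 of Bercu and Rouault \cite{BR01} (which in turn rests on Bryc and Dembo \cite{Bryc:1997}), so there is no in-paper argument to compare against. Your sketch is essentially the standard proof found in those references: compute the scaled cumulant generating function $\Lambda(\lambda)=\tfrac12\bigl(\gamma-\sqrt{\gamma^2-2\lambda}\bigr)$ for $\lambda<\gamma^2/2$ (via Girsanov/Riccati or the Karhunen--Lo\`eve/Fredholm determinant route), check steepness at the critical point $\gamma^2/2$, and apply G\"artner--Ellis. Your calculus is right: $\Lambda'(0)=(2\gamma)^{-1}$ recovers the a.s.\ limit noted in Remark \ref{rem:ldp}, $\Lambda'$ maps $(-\infty,\gamma^2/2)$ onto $(0,\infty)$, the Legendre transform simplifies to $(2\gamma c-1)^2/(8c)$ for $c>0$ and is $+\infty$ for $c\le0$ (consistent with $\mathcal S_T\ge0$), and goodness follows from $I(c)\to\infty$ at both ends of $(0,\infty)$. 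The only incomplete piece is the one you flag yourself: the uniform-in-$T$ identification of $\Lambda$, including that $E[\exp(\lambda\mathcal S_T)]=\infty$ for $\lambda>\gamma^2/2$ and large $T$, which requires the explicit formula $E[\exp(\lambda\mathcal S_T)]=\bigl(e^{\gamma T}/(\cosh(\delta T)+\tfrac{\gamma}{\delta}\sinh(\delta T))\bigr)^{1/2}$ with $\delta=\sqrt{\gamma^2-2\lambda}$, or an equivalent spectral argument. Since the paper treats the lemma as citable background, simply invoking \cite{BR01} is the economical choice, but your outline would serve as a self-contained substitute once that computation is written out.
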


\begin{remark}
\label{rem:ldp}
In fact $I(c)=0$ if $c=(2\gamma)^{-1}$. As noted in (1.5) of Bercu 
and Rouault \cite{BR01},
\begin{equation*}
\lim_{T\to\infty}\frac{\mathcal{S}_T}{T}=\frac{1}{2\gamma}
\end{equation*}
almost surely.  So this lemma is only useful to us when $c>(2\gamma)^{-1}$.  
\end{remark}
\begin{proof}[Proof of Proposition \ref{prop-large}]
Once again we first prove the proposition for $J=1$, that is for a solution $v$ to \eqref{eq:edwards-wilkinson} on $x\in[0,1]$.   

Observe that since $G_t(u_0)(x)$ tends to a constant uniformly in $x$ as 
$t\to\infty$, we need only deal with $N(\cdot,\cdot)$.  So for the purposes 
of this proof, we will assume that $u_0$ is identically 0 and thus
$u(t,x)=N(t,x)$ for $t\ge0$, $x\in[0,J]$.  

Also recall that $\theta_u$ was defined in \eqref{theta} and that we use
$\theta_\varphi$ for a function $\varphi$ which may not depend on $t$. 
Recall that $(X^{(n)}_\cdot)_{n\geq 0}$ was defined in \eqref{ou-n}. 
We also define
\begin{equation*}
S^{(n)}_T=\int_{0}^{T}\big(X^{(n)}_t\big)^2dt.  
\end{equation*}
Recall that $N$ was defined in \eqref{N-def}, and that \eqref{n-decom} states
\begin{equation*}
N(t,x) = \sum_{n=0}^{\infty}X^{(n)}_t\varphi_n(x).
\end{equation*}
We define $\bar{N}(t)=\int_{0}^{1}N(t,x)dx$, as we defined $\bar{u}$. Since
$\varphi_0(x)=1$ for $x\in[0,1]$ and since the $(\varphi_n)_{n\ge0}$ are
orthogonal, we have $\bar{N}(t)=X^{(0)}_t\varphi_0(x)$,
for all $x\in[0,1]$. Therefore
\begin{equation*}
N(t,x)-\bar{N}(t,x) = \sum_{n=1}^{\infty}X^{(n)}_t\varphi_n(x).
\end{equation*}
Again using the fact that $(\varphi_n)_{n\ge1}$ are orthonormal, we get
\begin{equation}
\label{eq:orthogonal}
R_N(T,1)^2 = \sum_{n=1}^{\infty}\frac{1}{T}\int_{0}^{T}(X^{(n)}_t)^2dt
=\sum_{n=1}^{\infty}\frac{1}{T}S^{(n)}_T.
\end{equation}
We bound the final term in \eqref{eq:orthogonal} using the following lemma
.
We first observe that for a family of events $(A_T)_{T>0}$ that satisfies a large
deviations principle with rate function $I>0$ we have 
\begin{equation*}
\limsup_{T\to\infty}\frac{1}{T}\log P(A_T) \le -I.
\end{equation*}
Then,  for all $\kappa>1$ there exists $T_\kappa$
such that for $T>T_\kappa$ we have
\begin{equation*}
\frac{1}{T}\log P(A_T) \le -\frac{I}{\kappa} .
\end{equation*}
and so for $T>T_\kappa$
\begin{equation} \label{a-t-event}
P(A_T) \le \exp\left(-\frac{TI}{\kappa}\right).
\end{equation}
In our case, we have a family of events $(A_T^{(n)})_{n\ge1,T\ge0}$ where 
\begin{equation*}
A_T^{(n)}:=\left\{\frac{1}{T}S^{(n)}_T>Kn^{-2}\right\}.
\end{equation*}
So in principle we would have different times $T_\kappa^{(n)}$ in 
\eqref{a-t-event}. In the proof of the following lemma we use the fact 
that our processes $X^{(n)}_t$ are related in law and by scaling to a 
single Ornstein-Uhlenbeck process, so we can choose a single $T_\kappa$ 
which works for all $n$. For the next lemma, we choose $\kappa=2$.

\begin{lemma} \label{lem:uniform-large-dev}
There exists $\overline T>0$ such that if $T>\overline T$ then for all
$n\ge1$ we have
\begin{equation*}
P\left(\frac{1}{T}S^{(n)}_T>Kn^{-2}\right) 
  \le \exp\left(-\frac{TnI(K)}{2}\right),
\end{equation*}
where $I(\cdot)$ was defined in \eqref{rate-f}.
\end{lemma}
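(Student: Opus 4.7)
The plan is to reduce the whole family $(X^{(n)})_{n\geq 1}$ of Ornstein--Uhlenbeck processes to a single one via a deterministic space-time rescaling, so that Lemma \ref{lem:LD-square} only needs to be invoked once. For each $n\geq 1$, set
\begin{equation*}
Y^{(n)}_s := n\pi\, X^{(n)}_{s/(n^2\pi^2)}, \qquad s\geq 0.
\end{equation*}
An Ito calculation, combined with Brownian scaling for the driver $B^{(n)}$, shows that $Y^{(n)}$ satisfies $dY^{(n)}_s = -Y^{(n)}_s\, ds + d\widetilde B^{(n)}_s$ with $Y^{(n)}_0 = 0$, where $\widetilde B^{(n)}$ is a standard Brownian motion. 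Thus $Y^{(n)}$ is a standard Ornstein--Uhlenbeck process (i.e.\ $\gamma=1$ in \eqref{eq:ON-process}) whose law does not depend on $n$. Writing $\mathcal S^{(n)}_u := \int_0^u (Y^{(n)}_s)^2\, ds$ and performing the change of variable $s = n^2\pi^2 t$ in the definition of $S^{(n)}_T$ yields the pathwise identity $S^{(n)}_T = \mathcal S^{(n)}_{n^2\pi^2 T}/(n^4\pi^4)$, so that
\begin{equation*}
P\!\left(\frac{S^{(n)}_T}{T} > \frac{K}{n^2}\right) = P\!\left(\frac{\mathcal S^{(n)}_{U_n}}{U_n} > K\pi^2\right), \qquad U_n := n^2\pi^2 T.
\end{equation*}

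Since $Y^{(n)}$ has the law of a single standard Ornstein--Uhlenbeck process $Y$, I can apply the large deviation upper bound in Lemma \ref{lem:LD-square} (with $\gamma = 1$) to $\mathcal S^{(n)}_u/u$ uniformly in $n$. With $J(c) := (2c-1)^2/(8c)$, this upper bound says: for every $\kappa > 1$ there is a threshold $U_\kappa > 0$ such that, for all $u > U_\kappa$ and all $n \geq 1$,
\begin{equation*}
P\!\left(\frac{\mathcal S^{(n)}_u}{u} > K\pi^2\right) \leq \exp\!\left(-\frac{u\, J(K\pi^2)}{\kappa}\right).
\end{equation*}
A direct computation gives $\pi^2 J(K\pi^2) = (2\pi^2 K - 1)^2/(8K)$, which is precisely the rate function $I(K)$ from \eqref{rate-f} taken at $\gamma = \lambda_1 = \pi^2$.

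I now take $\kappa = 2$ and set $\overline T := U_2/\pi^2$. For every $T > \overline T$ and every $n\geq 1$ one has $U_n = n^2\pi^2 T \geq \pi^2 T > U_2$, so the bound of the previous step applies at $u = U_n$ for every $n \geq 1$ simultaneously. Substitution yields
\begin{equation*}
P\!\left(\frac{S^{(n)}_T}{T} > \frac{K}{n^2}\right) \leq \exp\!\left(-\frac{n^2\pi^2 T\, J(K\pi^2)}{2}\right) = \exp\!\left(-\frac{n^2 T\, I(K)}{2}\right) \leq \exp\!\left(-\frac{n T\, I(K)}{2}\right),
\end{equation*}
which is exactly the required inequality. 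The only subtle point in the argument is the uniformity of the LDP threshold in $n$: invoking Lemma \ref{lem:LD-square} separately for each $X^{(n)}$ would produce a different pre-factor time $T_\kappa^{(n)}$, and these could in principle have no common upper bound. The rescaling disposes of this obstacle by converting the problem into a single LDP for $Y$, whose effective horizon $U_n = n^2\pi^2 T$ only grows with $n$, so a single $\overline T$ works for every $n\geq 1$.
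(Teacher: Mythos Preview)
Your proof is correct and follows essentially the same route as the paper: both rescale $X^{(n)}$ to a single standard Ornstein--Uhlenbeck process with $\gamma=1$, rewrite the event as $\{\mathcal S_{U_n}/U_n>K\pi^2\}$ with $U_n=n^2\pi^2T$, and apply Lemma~\ref{lem:LD-square} once so that a single threshold $\overline T$ works for all $n$. Your write-up is in fact a bit more explicit than the paper's, spelling out the identification $\pi^2 J(K\pi^2)=I(K)$ (with $I$ taken at $\gamma=\pi^2$) and the final inequality $n^2\geq n$, both of which the paper leaves implicit.
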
 

Before proving this lemma, we use it to finish the proof of Proposition 
\ref{prop-large} in the case $J=1$. 

Let $\beta>0$ be a fixed constant. Recall that for every $K_{0}>0$, $K_2$ was defined in \eqref{k2-def}. 
Define 
\begin{equation} \label{c-0}  
c_0= \left(\sum_{n=1}^\infty \frac{1}{n^{2}}\right)^{-1} = \frac{6}{\pi^2}.
\end{equation} 
Now taking $c_{n}=2^{-3}c_0K_2^2n^{-2}$ and 
$\gamma=\lambda_n= n^2\pi^2$ in \eqref{rate-f}, we find that 
$c_{n}\gamma=2^{-3}c_0\pi^{2}K_2^2$. Following Remark \ref{rem:ldp}, in order to use Lemma \ref{lem:LD-square}, we need to show that 
\begin{equation} \label{req} 
2c_{n}\gamma=2^{-2}\pi^{2}c_0K_2^2 > 1, \quad \textrm{for all } n\geq 1. 
\end{equation}

We show that this requirement is satisfied in the following two 
cases. Case 1: for $\beta\geq e$, 
$K^{2}_{2}= \beta^{2/3}\log \beta K^{2}_0 \geq  K^{2}_0$.  
Note that we need $\beta> e$ here since $\log \beta $ could be arbitrarily small 
near $\beta =1$. Choosing $K^{2}_{2} > \frac{4}{\pi^{2}c_0}$ 
will be sufficient for \eqref{req}. Case 2: when $0<\beta<e $, 
$K^{2}_{2}= K^{2}_0$, and again 
$K^{2}_{2} > \frac{4}{\pi^{2}c_0}$ will be sufficient for 
\eqref{req}. 
From \eqref{rate-f}, \eqref{c-0} and \eqref{req} it follows that for $K_{0}$ (and therefore $K_{2}$) large enough, 
\begin{equation}  \label{i-c}
\begin{aligned}
I(c_{n})&=\frac{(2\gamma c_{n}-1)^2}{8c_{n}}  \\
&=\frac{(2^{-2}\pi^{2}c_0K_2^2 -1)^{2}}
 {  c_0K_2^2n^{-2} }  \\
 %&\geq  \frac{\pi^{4}}{17}n^2K_2^2 \\
 &\geq \frac{1}{2}n^2K_2^2, \quad \textrm{for all } n\geq 1.
\end{aligned}
\end{equation} 
%where we have used the fact that $C_{0}\geq 1/3$ in the last inequality. 
From \eqref{eq:orthogonal} we get  
\begin{equation} \label{eq:p-r-sub-n}
\begin{split}
P\left(R_N(T,1) \geq K_{2}\right) &=
P\left(R_N(T,1)^2 \geq K_{2}^{2}\right) \\
&  \leq \sum_{n=1}^\infty P\left(\frac{1}{T}S^{(n)}_T>  c_0K_2^2n^{-2}\right),
\end{split}
\end{equation} 
where we have used \eqref{c-0} in the last inequality. 
Using this \eqref{eq:p-r-sub-n} together with Lemma 
\ref{lem:uniform-large-dev} and \eqref{i-c} give us that
%^that there exist constants $C_1,C_2>0$ such that
\begin{equation*}
\limsup_{T\to\infty}P\left(R_N(T,1) \geq K_{2}\right) 
\le \limsup_{T\to\infty} \sum_{n=1}^{\infty}\exp\left(-TI(c_0K_{2}n^{-2})/2\right) = 0.
\end{equation*}

\begin{proof}[Proof of Lemma \ref{lem:uniform-large-dev}]
Let $(Y_t)_{t\ge0}$ satisfy \eqref{eq:ON-process} with $\gamma=1$, that is 
$Y_0=0$ and $dY=-Ydt+dB$. For $a,b>0$ let
\begin{equation*}
\begin{split}
Z^{(a,b)}_t &= aY_{bt}  \\
B^{(a,b)}_t &= aB_{bt}.
\end{split}
\end{equation*}
Then we find
\begin{equation*}
\begin{split}
dZ^{(a,b)}_t &= d(aY_{bt})  \\
&= -aY_{bt}d(bt) + adB_{bt}  \\
&= -bZ^{(a,b)}_t + dB^{(a,b)}_t
\end{split}
\end{equation*}
and also $Z^{(a,b)}_0=0$.

Choosing $b=\lambda_n=\pi^2n^2$ and $a=(\pi n)^{-1}$ and writing 
$Y^{(n)}=Z^{(a,b)}$, we see that $B^{(n)}:=B^{(a,b)}$ is a standard Brownian
motion. So for $t\ge0$
\begin{equation*}
dY^{(n)}_t = -\lambda_n Y^{(n)}dt + dB^{(n)}_t
\end{equation*}
and $(X^{(n)}_t)_{t\ge0}$ and $(Y^{(n)}_t)_{t\ge0}$ are equal in distribution. 
Then Lemma \ref{lem:LD-square} implies that
\begin{equation*}
\begin{split}
P\left(\frac{1}{T}S^{(n)}_T>Kn^{-2}\right)
&= P\left(\frac{1}{T}\int_{0}^{T}(X^{(n)}_t)^2dt>Kn^{-2}\right)  \\
&=  P\left(\frac{1}{T}\int_{0}^{T}(Y^{(n)}_t)^2dt>Kn^{-2}\right)  \\
&=  P\left(\frac{1}{T}\int_{0}^{T}
        \left((\pi n)^{-1}Y_{\pi^2n^2t}\right)^2dt>Kn^{-2}\right)  \\
&=  P\left(\frac{1}{\pi^2n^2T}\int_{0}^{\pi^2n^2T}
        \left(Y_s\right)^2ds>\pi^2K\right),
\end{split}
\end{equation*}
where we have used the change of variable $s=\pi^2n^2t$.  

Then Lemma \ref{lem:LD-square} finishes the proof of Lemma 
\ref{lem:uniform-large-dev}. As we remarked, this also finishes the proof of 
Lemma \ref{prop-large} when $J=1$.  
\end{proof}

Now use scaling to finish the proof of Lemma \ref{prop-large} for all $J>0$.  
Let $u$ be the solution to \eqref{eq:edwards-wilkinson} 
on $x\in[0,J]$.  Recall that $v$ is the solution to 
\eqref{eq:edwards-wilkinson} on $x\in[0,1]$. Now we use Lemma 
\ref{lem-scale}(ii) and (iii) together with \eqref{h-scale} 
to get along the same lines as \eqref{scale_trans} that 
\begin{align*} 
Q_{TJ^{-2},1,\beta_{v}}&\big(  R_v(TJ^{-2},1)\leq K_0h(\beta_{v},1)^{1/2}\beta_{v}^{1/3}\big)  \\
& =Q_{T,J,\beta_{u}}\big( R_u(T,J)\leq K_0h(\beta_{u},J)^{1/2}\beta_u^{1/3}J^{5/3}\big), 
\end{align*}  
where $\beta_u = J^{-7/2}\beta_v^{} $.  
From \eqref{k2-def}, \eqref{events} and Proposition \ref{prop-large}(i) with $J=1$ we get that there exists $K_{0}>0$ such that for any $\beta_{v} \geq e$ we have,
\begin{equation*}
\lim_{T\to\infty}Q_{T,1,\beta_{v}} \big( 
 R_v(T,1)\leq K_0 h(\beta_{v},1)^{1/2}\beta_{v}^{1/3} \big)=1.
\end{equation*}
It follows that 
\begin{equation*}
\lim_{T\to\infty} Q_{T,J,\beta_{u}}\big( R_u(T,J)\leq K_0\beta_{u}^{1/3}h(\beta_{u},J)^{1/2}J^{5/3}\big)=1, 
\end{equation*}
for $\beta_u = J^{-7/2}\beta_v^{} \geq eJ^{-7/2}$, so we get (i) for all $J>0$. 
 \medskip \\
From Lemma \ref{lem:uniform-large-dev} with $J=1$, we get that there exists 
$K_{0}>0$ such that for any $0<\beta_{v} \leq e$ we have,
\begin{equation*}
\lim_{T\to\infty}Q_{T,1,\beta_{v}} \big( 
 R_v(T,1)\leq K_0 \big)=1.
\end{equation*}
Therefore, using the same scaling argument, for any $J\geq 1$ and 
$\beta_u  \leq eJ^{-7/2}$ we have 
\begin{equation*}
\lim_{T\to\infty} Q_{T,J,\beta_{u}}\big( R_u(T,J)\leq \tilde K_{0}J^{5/3}\big)=1, 
\end{equation*}
where  $\tilde K_{0}= K_0 J^{-7/6} \leq K_{0}$. Then we get (i) for all $J \geq 1$.

This finishes the proof of Proposition \ref{prop-large}.  
\end{proof} 
 
\section{Scaling of the Polymer}\label{section-scale}
 
This section is dedicated to the proof of Lemma \ref{lem-scale}. 
Throughout this section we assume that $u(t,x)$ satisfies equation \eqref{eq:edwards-wilkinson} on $x\in[0,J]$ and that $v$ is defined as in \eqref{def-v}.

\begin{proof}[Proof of Lemma \ref{lem-scale}]
(i) This is a well known property of the stochastic heat equation, 
but we give a short derivation for completeness.  First, recall that
$(G^J_t(x))_{t\ge0,x\in[0,J]}$ is the Neumann heat kernel on $[0,J]$, and 
that the following scaling equalities hold.  
\begin{gather*}
G^1_{J^{-2}t}(J^{-1}x,J^{-1}y)=J\cdot G^J_t(x,y),  \\
W^{J^{2}, J}(dy\, ds) := J^{-3/2}W(Jdy,J^2ds)
    \stackrel{\mathcal{D}}{=}W(dyds),
\end{gather*}
where $\stackrel{\mathcal{D}}{=}$ means equality in distribution.  
The first equality above follows from \eqref{eq:neumann-expansion} 
and the scaling properties of the heat kernel on $\mathbf{R}$.  
Then by the definition \eqref{def-v} of $v(t,x)$,
\begin{align*}  
v(t,z)&=J^{-1/2}u(J^2t,Jz)  \\
&=J^{-1/2}\int_{0}^{J}G_J(J^2t,Jz-y)u_0(y)dy  \\
&\quad +J^{-1/2}\int_{0}^{J^2t}\int_{0}^{J}G_J(J^2t-s,Jz-y)W(dyds)\\
&= J^{-3/2}\int_{0}^{J}G_1(t,z-J^{-1}y)u_0(y)dy  \\
&\quad +J^{-3/2}\int_{0}^{J^2t}\int_{0}^{J}G_1(t-J^{-2}s,z-J^{-1}y)W(dyds)\\
&= \int_{0}^{1}G_1(t,z-w)v_0(w)dw  \\
&\quad +\int_{0}^{t}\int_{0}^{1}G_1(t-r,z-w)W^{J^2, J}(dwdr),
\end{align*}
This proves part (i).\\

(ii) We first derive the relation between $\ell^u_t$ and $\ell^v_t$. 
From \eqref{l-time} and \eqref{def-v} we get 
\begin{align*}
\ell^v_t(y)&=\partial_y\int_{0}^{1}\mathbf{1}_{(-\infty,y]}(v(t,x))dx \\
&=
 \partial_y\int_{0}^{1}\mathbf{1}_{(-\infty,y]}(J^{-1/2}u(J^2t,Jx))dx \\
&=\partial_y\int_{0}^{1}\mathbf{1}_{(-\infty,J^{1/2}y]}(u(J^2t,Jx))dx.
\end{align*}
Changing variables to $z=Jx$, $w=J^{1/2}y$ we get
\begin{align*}
\ell^v_t(y)
&=J^{1/2}\partial_w\int_{0}^{J}\mathbf{1}_{(-\infty,w]}(u(J^2t,z))J^{-1}dz \\
&=J^{-1/2}\ell^u_{J^2t}(w).
\end{align*}
It follows that 
\begin{equation*}
\ell^u_t(w)=J^{1/2}\ell^v_{J^{-2}t}(y)=J^{1/2}\ell^v_{J^{-2}t}(J^{-1/2}w). 
\end{equation*}
Thus,  
\begin{align*}
\int_{0}^{T}\int_{-\infty}^{\infty}\ell^u_t(w)^2dwdt
&=J\int_{0}^{T}\int_{-\infty}^{\infty}
 \ell^v_{J^{-2}t}(J^{-1/2}w)^2dwdt.
%&=J^{3/2}\int_{0}^{T}\int_{-\infty}^{\infty}
 %\ell^v_{J^{-2}t}(J^{-1/2}w)^2dwdt.
\end{align*}
Making the change of variable $s=J^{-2}t$ and going back to the original variable $w=J^{1/2}y$, we get
\begin{equation}  \label{scale-sq-lt}
\begin{aligned}
\int_{0}^{T}\int_{-\infty}^{\infty}\ell^u_t(w)^2dwdt
&=J\int_{0}^{J^{-2}T}\int_{-\infty}^{\infty}
 \ell^v_{s}(y)^2J^{1/2}dyJ^2ds   \\
&=J^{7/2}\int_{0}^{J^{-2}T}\int_{-\infty}^{\infty}
 \ell^v_{s}(w)^2dyds. 
\end{aligned}
\end{equation} 
Let $\beta>0$. From \eqref{z-func} and \eqref{scale-sq-lt} we get 
\begin{equation*}
\begin{aligned}
\mathcal{E}^{u}_{T,J,\beta_{u}}&= \exp\left(-\beta \int_{0}^{T}\int_{-\infty}^{\infty}\ell^u_t(y)^2dydt\right) \\
&=
\exp\left( -\beta J^{7/2}\int_{0}^{J^{-2}T}\int_{-\infty}^{\infty}
 \ell^v_{s}(w)^2dyds \right) \\
 &=\mathcal{E}^{v}_{J^{-2}T,1,\beta J^{7/2}}. 
 \end{aligned}
\end{equation*}
From the above equation together with \eqref{eq:def-Q} we get 
$$
 \quad Q^{u}_{T,J,\beta }(\cdot) =Q^{v}_{TJ^{-2},1,\beta J^{7/2} }(\cdot),
  $$
and we have proved (ii). \\

(iii) Recall the definitions of $R_v(T,1)$ and $R_u(J^2T,J)$ from \eqref{eq:H-def}. From \eqref{u-bar} and \eqref{def-v} we get 
\begin{equation} \label{v-bar-sc}
\begin{aligned} \bar v(t) &= \int_0^1v(t,x)dx \\
&=J^{-1/2}\int_0^1u(J^2 t,Jx)dx \\
&=J^{-3/2}\int_0^Ju(J^2 t,x)dx \\
&=J^{-1/2} \bar u(J^2 t,x).
\end{aligned} 
\end{equation}

Using \eqref{def-v}, \eqref{v-bar-sc} and making the change of variables $s=J^2t$, $y=Jx$, we get 
\begin{align*}
R_v(T,1)^2 &=\frac{1}{T}\int_{0}^{T}\int_0^1\Big( v(t,x)
 - \bar v(t)\Big)^2dxdt  \\
& = \frac{1}{TJ}\int_{0}^{T}\int_0^1\Big( u(J^2t,Jx)
 - \bar u(J^2t)\Big)^2dxdt  \\
& = \frac{1}{TJ^4}\int_{0}^{TJ^{2}}\int_0^J\Big( u(s,y)
 - \bar u(s)\Big)^2dydt  \\
&= J^{-1}R_u(J^2T,J)^2.
\end{align*}
Taking the square root and defining $\tilde T = TJ^{-2}$, we have
\begin{equation*}
R_u(\tilde T,J) =J^{1/2}R_v(J^{-2}\tilde T,1), 
\end{equation*}
and we get (iii). 
\end{proof} 

\section{Properties of the pinned string} \label{pf-prop-string}
This section we prove Lemmas \ref{lem-conCD}, \ref{l-t-ident} and \ref{lemma-shift}. 

\begin{proof} [Proof of Lemmas \ref{lem-conCD}]

(i) Recall that $(C)$ was defined in \eqref{eq:u-drift}. Using Ito's
isometry, \eqref{phi-n}, \eqref{g-exp}, Fubini's theorem and the fact that 
$\{\varphi_{n}\}_{n\geq 0}$ is an orthonormal basis, we get
\begin{equation} \label{rt1}
\begin{aligned}
&\hat{E}[(C)^2]  
= \int_{-\infty}^{t_0}\int_{0}^{1}\big[G_{t-r}(x,y)-G_{t_0-r}(x_0,y)\big]^2dydr   \\
&=\int_{-\infty}^{t_0}\sum_{n=0}^{\infty}\sum_{j=0}^{\infty}
 \left[e^{-(t-r)\lambda_n}\varphi_n(x)-e^{-(t_0-r)\lambda_n}\varphi_n(x_0)\right]  \\ 
 & \qquad \times  \left[e^{-(t-r)\lambda_j}\varphi_j(x)-e^{-(t_0-r)\lambda_j}\varphi_j(x_0)\right]\int_{0}^{1}\varphi_n(y)\varphi_j(y) dydr \\ 
&=\int_{-\infty}^{t_0}\sum_{n=0}^{\infty}
 \left[e^{-(t-r)\lambda_n}\varphi_n(x)-e^{-(t_0-r)\lambda_n}\varphi_n(x_0)\right]^2dr \\
&\leq2\int_{0}^{\infty}\sum_{n=1}^{\infty}
 \left[e^{-(t-t_{0}+r)\lambda_n}+e^{-r\lambda_n}\right]^2dr,
\end{aligned}
\end{equation}
where we dropped the $n=0$ term because $\lambda_0=0$ and $\varphi_0\equiv 1$, so the difference in the brackets is zero.  
Also, we used the fact that $\|\varphi_n\|_\infty^2\leq2$ for all $n\geq 1$, which follows from \eqref{phi-n}.  
It follows that 
\begin{equation*}
\hat{E}[(C)^2] \leq2\sum_{n=0}^{\infty}\int_{0}^{\infty}
 \left[2e^{-r\lambda_n}\right]^2dr 
\leq8\sum_{n=0}^{\infty}(2\lambda_n)^{-1} 
<\infty, 
\end{equation*}
where we used \eqref{phi-n} in the last inequality.  \\

(ii)
From \eqref{hk-int}, we get 
\begin{align*}
|(D)|&= \left|\int_{-\infty}^{t_0}\int_{0}^{1}
 \big[G_{t-r}(x,y)-G_{t_0-r}(x_0,y)\big]\varphi_1(y)dydr\right|   \\
&=\left|\int_{0}^{\infty}\int_{0}^{1}
 \big[G_{t-t_0+r}(x,y)-G_{r}(x_0,y)\big]\varphi_1(y)dydr\right|   \\
&\leq \int_{0}^{\infty}\left[e^{-\lambda_1(t-t_{0}+r)}+e^{-\lambda_1r}\right]
 |\varphi_1(x)| dr  \\
&<\infty.  
\end{align*}
\end{proof}

\begin{proof} [Proof of Lemma \ref{l-t-ident}]
Let $\hat{f}_{t,x_1,x_2}$ be the joint 
probability density of\\
$(u(t,x_1),u(t,x_2))$ under $\hat P_T^{(a)}$.   

Let $S_1,S_2\subset\mathbf{R}$.  Then from \eqref{l-time} we have
\begin{align*}
\int_{-\infty}^{\infty}\int_{-\infty}^{\infty}
 \mathbf{1}_{S_1}(y_1)&\mathbf{1}_{S_2}(y_2)
 \ell_t(y_1)\ell_t(y_2)dy_2dy_1  \\
&= \int_{0}^{1}\int_{0}^{1}\mathbf{1}_{S_1}(u(t,x_1))
 \mathbf{1}_{S_2}(u(t,x_2))dx_2dx_1. 
\end{align*}
Therefore
\begin{align*}
\hat{E} \bigg[\int_{-\infty}^{\infty}&\int_{-\infty}^{\infty}
 \mathbf{1}_{S_1}(y_1)\mathbf{1}_{S_2}(y_2)
 \ell_t(y_1)\ell_t(y_2)dy_2dy_1 \bigg] 
  \\
&= \int_{0}^{1}\int_{0}^{1}\int_{-\infty}^{\infty}\int_{-\infty}^{\infty}
 \mathbf{1}_{S_1}(y_1)\mathbf{1}_{S_2}(y_2)\hat{f}_{t,x_1,x_2}(y_1,y_2)
 dy_2dy_1dx_2dx_1.
\end{align*}
By using Dynkin's monotone class theorem we can move from 
indicator functions to simple functions to nonnegative measurable 
functions to bounded measurable functions 
$(h(y_1,y_2))_{y_1,y_2\in\mathbf{R}}$, we get
\begin{align*}
\hat{E}\bigg[\int_{-\infty}^{\infty}\int_{-\infty}^{\infty}
 &h(y_1,y_2)\ell_t(y_1)\ell_t(y_2)dy_2dy_1 \bigg] \\
&= \int_{0}^{1}\int_{0}^{1}\int_{-\infty}^{\infty}\int_{-\infty}^{\infty}
h(y_1,y_2)\hat{f}_{t,x_1,x_2}(y_1,y_2)dy_2dy_1dx_2dx_1.
\end{align*}
Finally, replacing $h(y_1,y_2)$ by an approximate identity 
$a(y_1-y_2)$ and taking a limit, we get
\begin{equation*}
\hat{E}\bigg[\int_{-\infty}^{\infty}\ell_t(y)^2dy  \bigg]
= \int_{0}^{1}\int_{0}^{1}\int_{-\infty}^{\infty}
 \hat{f}_{t,x_1,x_2}(y,y)dydx_2dx_1 .
\end{equation*}

Recall that $\hat{g}_{t,x_1,x_2}$ is the probability density function of 
$u(t,x_2)-u(t,x_1)$ under $\hat P_T^{(a)}$. Then by elementary probability,
\begin{equation*}
\hat{g}_{t,x_1,x_2}(z)=\int_{-\infty}^{\infty}\hat{f}_{t,x_1,x_2}(y,z+y)dy
\end{equation*}
and we get that 
\begin{equation*}
\hat{E}\bigg[\int_{-\infty}^{\infty}\ell_t(y)^2dy  \bigg]
= \int_{0}^{1}\int_{0}^{1}\hat{g}_{t,x_1,x_2}(0)dx_2dx_1.
\end{equation*}
\end{proof} 

\begin{proof} [Proof of Lemma \ref{lemma-shift}] 
Let $0\leq t_{0} \leq T$ and $x_{0}\in[0,1]$. From \eqref{eq:u-drift} 
we have for any $t_{0} \leq t \leq T$ and $x_{1}, x_{2} \in [0,1]$, 
\begin{equation*}  
\begin{aligned} 
u_{t_0,x_0}(&t,x_{1})-u_{t_0,x_0}(t,x_{2})\\
&=\int_{t_0}^{t}\int_{0}^{1}\big(G_{t-r}(x_{1},y)-G_{t-r}(x_{2},y)\big)\hat{W}(dydr)  \\
&\quad +a\int_{t_0}^{t}\int_{0}^{1}\big(G_{t-r}(x_{1},y)-G_{t-r}(x_{2},y)\big)\varphi_1(y)dydr  \\
&\quad+\int_{-\infty}^{t_0}\int_{0}^{1}\big(G_{t-r}(x_{1},y)-G_{t-r}(x_{2},y)\big)\hat{W}(dydr)   \\
&\quad+a\int_{-\infty}^{t_{0}}\int_{0}^{1}\big(G_{t-r}(x_{1},y)-G_{t-r}(x_2,y)\big)\varphi_1(y)dydr  \\
&=\int_{-\infty}^{t}\int_{0}^{1}\big(G_{t-r}(x_{1},y)-G_{t-r}(x_{2},y)\big)\hat{W}(dydr)  \\
&\quad+a\int_{-\infty}^{t}\int_{0}^{1}\big(G_{t-r}(x_{1},y)-G_{t-r}(x_2,y)\big)\varphi_1(y)dydr  \\
&=: \mathcal{J}_{1}(t,x_{1},x_{1}) + a\mathcal{J}_{2}(t,x_{1},x_{1}).
\end{aligned} 
\end{equation*} 
 
Note that $\mathcal{J}_{1}(t,\cdot,\cdot)$ is a centered Gaussian random field with the following covariance functional 
\begin{equation*}  
C_{t}(x_{1},x_{2}) :=E\left[\int_{-\infty}^{t}\int_{0}^{1}(G_{t-r}(x_{1},y)-G_{t-r}(x_{2},y))^{2}dydr \right]. 
\end{equation*}
By a change of variable $s= t-r$ we get 
\begin{equation}  
\begin{aligned} 
C_{t}(x_{1},x_{2})  &= E\left[\int_{0}^{\infty}\int_{0}^{1}(G_{s}(x_{1},y)-G_{s}(x_{2},y))^{2}dyds \right]  \\ 
 &=C_{0}(x_{1},x_{2}). 
 \end{aligned} 
\end{equation}
So we have 
\begin{equation*}
\mathcal{J}_{1}(t,\cdot,\cdot) \stackrel{\mathcal{D}}{=} \mathcal{J}_{1}(0,\cdot,\cdot). 
\end{equation*}
We also get by the change of variable $s= t-r$ that 
\begin{align*} 
\mathcal{J}_{2}(t,x_{1},x_{2}) &=a\int_{-\infty}^{t}\int_{0}^{1}\big[G_{t-r}(x_{1},y)-G_{t-r}(x_2,y)\big]\varphi_1(y)dydr  \\ 
&=a\int_{0}^{\infty}\int_{0}^{1}\big[G_{s}(x_{1},y)-G_{s}(x_2,y)\big]\varphi_1(y)dydr  \\ 
&=\mathcal{J}_{2}(0,x_{1},x_{2}), 
\end{align*} 
and Lemma \ref{lemma-shift} follows. 
\end{proof} 

\section{Proofs of Lemmas \ref{lemma:2nd-mom}--\ref{int-conv}}\label{pf-int-conv}
 
\begin{proof} [Proof of Lemma \ref{lemma:2nd-mom}] 
The proof of the upper bound is standard (see e.g. Proposition 3.7 in Chapter III.4 of \cite{wal86}).  

Next we prove the lower bound for
\begin{equation*}
E\left[\left(u(t,x_1)-u(t,x_2)\right)^2\right] 
 = \int_{0}^{\infty}\int_{0}^{1}\left[G_t(x_1,y)-G_t(x_2,y)\right]^2dydt.
\end{equation*}
From \eqref{phi-0}, \eqref{phi-n} and \eqref{g-exp}  we have 
\begin{equation} \label{g-dec} 
G_t(x,y)=1+2\sum_{n=1}^{\infty}e^{-\pi^2n^2t}
 \cos(\pi nx)\cos(\pi ny).
\end{equation}
By repeating similar steps as in \eqref{rt1} and using the orthogonality 
of $(\varphi_n)_{n\ge1}$ we get
\begin{equation}  \label{l-1}
\begin{split} 
\mathcal{J}:=&\int_{0}^{\infty}\int_{0}^{1}\left[G_t(x_1,y)-G_t(x_2,y)\right]^2dydt \\
 &= 2\sum_{n=1}^{\infty}\int_{0}^{\infty}
 e^{-2\pi^2n^2 t}\left[\cos(\pi nx_1)-\cos(\pi nx_2)\right]^2dt  \\
&= \frac{1}{\pi^2}\sum_{n=1}^{\infty}\frac{1}{n^{2}}
 \left[\cos(\pi nx_1)-\cos(\pi nx_2)\right]^2.
\end{split} 
\end{equation} 
Let $x=(x_1+x_2)/2$ and $h=x_2-x_1$. It suffices to prove our estimate for 
$h<\delta_0$ where $\delta_0>0$ is some small number to be chosen later.  
By symmetry we may assume $x\in[0,1/2]$, otherwise we reflect the 
configuration about $1/2$.  Also note that $0\le x_1=x-h/2$ so $x\ge h/2$.
Let $\delta_1>0$ be a small number to be chosen later, and let 
$M=[2h^{-1}(1-\delta_1)]$ where $[\cdot]$ denotes the greatest integer 
function.  

Recall the following trigonometric identities.  
\begin{equation*}
\begin{split}
\cos(a)-\cos(b) 
&= -2\sin\left(\frac{a-b}{2}\right)\sin\left(\frac{a+b}{2}\right)  \\
\sum_{n=1}^{M}\sin^2(nx)
&= \frac{1}{4}\left[2M+1-\frac{\sin((2M+1)x)}{\sin(x)}\right]
\end{split}
\end{equation*}

Continuing to estimate the above integral $\mathcal{J}$, we get (since $h<1/4$) 
and using the above trigonometric identities,
\begin{equation*}
\begin{split}
\mathcal{J} &= \frac{4}{\pi^2}\sum_{n=1}^{\infty}\frac{1}{n^2}
  \sin^2\left(n\pi2^{-1}h\right)\sin^2\left(n\pi x\right)  \\
&\ge \frac{4}{\pi^2}\sum_{n=1}^{M}\frac{1}{n^2}
  \sin^2\left(n\pi2^{-1}h\right)\sin^2\left(n\pi x\right)  \\
&\ge Ch^2\sum_{n=1}^{M}\sin^2\left(n\pi x\right)  \\
&= Ch^2\left[2M+1-\frac{\sin((2M+1)\pi x)}{\sin(\pi x)}\right] 
\end{split}
\end{equation*}
For the second to the third line, we require
\begin{equation} \label{eq:first-bound}
\pi(1-\delta_1)-1\le M\pi2^{-1}h\le \pi(1-\delta_1)
\end{equation}
which holds by the definition of $M$.  This implies 
that for $1\le n\le M$ we have $\sin(n\pi2^{-1}h)\ge cnh$ for 
some $c>0$ not depending on $n$. 

Now we wish to show that the term in square brackets above is of order $M$.  
We need to show that for some small number $\delta_2>0$ to be chosen later,
\begin{equation} \label{eq:need-to-show}
\frac{\sin((2M+1)\pi x)}{\sin(\pi x)}\le 2M(1-\delta_2).
\end{equation}
First, $\sin((2M+1)\pi x)\le1$.
We also want a lower bound on $\sin(\pi x)$.  Recall that $h/2\le x\le1/2$ 
and $h<\delta_0$.  Since $\sin(x)$ is increasing on $x\in[h/2,1/2]$,
if $\delta_3>0$ then we can choose $\delta_0$ small enough so that
\begin{equation*}
\sin(\pi x)\ge \sin(\pi2^{-1}h) \ge \pi2^{-1}h(1-\delta_3).
\end{equation*}
Thus if we assume equality holds in \eqref{eq:first-bound} then we have
\begin{equation*}
\begin{split}
\frac{\sin((2M+1)\pi x)}{\sin(\pi x)}
&\le \frac{1}{\pi2^{-1}h(1-\delta_3)}  \\
&= M\cdot\frac{1}{(M\pi2^{-1}h)(1-\delta_3)}  \\
& \leq M\cdot\frac{1}{[\pi(1-\delta_1)-1](1-\delta_3)},
\end{split}
\end{equation*}
which verifies \eqref{eq:need-to-show}, provided $\delta_1,\delta_2,\delta_3$ 
are small enough.  

The end result is that for $\delta_2$ small enough, there exists 
$\widetilde C>0$ such that,
\begin{equation*}
\mathcal{J}\ge Ch^2M\delta_2 \geq 2\widetilde Ch\delta_2 = 2\widetilde C|x_1-x_2|\delta_2,  \quad \textrm{for all } |x_1-x_2| \leq \delta_{0},
\end{equation*}
where we have used \eqref{eq:first-bound} in the second inequality. 
\end{proof}

\begin{proof}[Proof of Lemma \ref{lem-drift}]
Let $0\leq x_{1} \leq x_{2}\leq 1$. Then we have 
\begin{equation*}
\begin{aligned}  
\mathcal D(x_{1},x_{2})&:=(F)(x_{1}) -(F)(x_{2}) \\
&=\frac{\sqrt{2}}{\pi^{2}}\big(\cos(\pi x_{1}) -\cos(\pi x_{2}) \big) \\
&= \frac{2\sqrt{2}}{\pi^{2}} \sin\left(\frac{\pi}{2} (x_{1}+x_{2})\right) \sin\left(\frac{\pi}{2}(x_{2}-x_{1})\right). 
\end{aligned} 
\end{equation*}
We will use the following lower bounds on the sine function: 
\begin{align*} 
\sin(\pi x) &\geq \frac{ x}{2}, \quad \textrm{for all } 0\leq x\leq \frac{1}{2}, \\ 
\sin(\pi x) & \geq  1-x, \quad \textrm{for all } \frac{1}{2}\leq x\leq 1. 
\end{align*}
We get that 
$$
 \sin\left(\frac{\pi}{2}(x_{2}-x_{1})\right) \geq \frac{x_{2} -x_{1}}{4}, \quad \textrm{for all } 0\leq x_{1} \leq x_{2} \leq 1.
$$
For $0\leq x_{1} \leq x_{2} \leq 1$ we also have 
$$
 \sin\left(\frac{\pi}{2}(x_{2}+x_{1})\right) \geq 
 \begin{cases}
         \frac{x_{2} +x_{1}}{4}, & \text{for }   x_{1}+x_{2} \leq 1 , \\ 
         1-\frac{x_{2}+x_{1}}{2}, & \text{for }   1< x_{1}+x_{2} \leq 2. 
        \end{cases}
$$
We get that for all $0\leq x_{1} \leq x_{2} \leq 1$ 
$$
 \mathcal D(x_{1},x_{2}) \geq 
 \begin{cases}
           \frac{\sqrt{2}}{8\pi^{2}} (x_{2} -x_{1})(x_{2} +x_{1}), & \text{for }   x_{1}+x_{2} \leq 1 , \\ 
         \frac{\sqrt{2}}{2\pi^{2}} (x_{2} -x_{1})\left(1-\frac{x_{2}+x_{1}}{2}\right), & \text{for }   1< x_{1}+x_{2} \leq 2. 
        \end{cases}
$$
\end{proof} 

\begin{proof}[Proof of Lemma \ref{int-conv}]

First we bound $\mathcal I_{2}(a)$. Recall that
\begin{align*}
\mathcal I_{2}(a) =&  \int_{0}^{1}\int_{x_{1}}^{1} \mathbf{1}_{\{x_{1}+x_{2}
> 1\}} (x_{2}-x_{1})^{-1/2} \\
&\qquad \qquad \times\exp\left(-C_{4}a^2\left(1- \frac{x_{2}+x_{1}}{2}\right)^2(x_{2}-x_{1})\right)dx_{1}dx_{2}.
\end{align*}
Using the transformation 
\begin{align*} 
x'_{2}-x'_{1} &= x_{2}-x_{1}, \\
x'_{2}+x'_{1} &= 1-\frac{x_{2}+x_{1}}{2},
\end{align*} 
we get that 
\begin{align*}
&\mathcal I_{2}(a) \\
 &\leq  \frac{1}{2}\int_{-1}^{1}\int_{x_{1}'}^{1}  (x'_{2}-x'_{1})^{-1/2} \exp\left(-C_{4}a^2\left( x'_{1}+ x'_{2}\right)^2(x'_{2}-x'_{1})\right)dx'_{1}dx'_{2}, 
\end{align*}
where the inequality is due to an enlargement of the integration region. 

Letting $x'_{1}=x$ and $x'_{2}-x'_{1} = h$ we get that 
\begin{equation} \label{i1-2} 
\mathcal I_{2}(a) \leq  \frac{1}{2}\int_{-1}^{1}\int_{0}^{1} h^{-1/2} \exp\left(-C_{4}a^2\left( 2x+h\right)^2h\right)dh dx.
\end{equation}

We distinct between the following cases.  \\

\textbf{Case I: $0\leq a\leq1$.}

Let $y=2a^{2/3}x$, $g=a^{2/3}h$.  Then we get from \eqref{i1-2} that 
\begin{equation*}   
\begin{aligned}
\mathcal I_{2}(a) &\leq \frac{1}{4a}\int_{-a^{2/3}}^{a^{2/3}}\int_{0}^{2a^{2/3}} g^{-1/2}\exp\left(-\frac{C_{4}}{2}(y+g)^2g\right)dgdy \\
&\leq  \frac{1}{4a}  \int_{-a^{2/3}}^{a^{2/3}} \int_{0}^{2a^{2/3}}g^{-1/2}dgdy  \\ 
&\leq C \frac{1}{a}, 
\end{aligned}
\end{equation*} 
where $C$ does not depend on $a$. We therefore derived the bound (i) for $\tilde{ \mathcal I_{2}}(a) $. \\

\textbf{Case II: $a>1$.}

We split the right-hand side of \eqref{i1-2} into two regions:
\begin{equation} \label{i1-10} 
\begin{aligned}  
\frac{1}{2}\int_{-1}^{1}&\int_{0}^{1} h^{-1/2} \exp\left(-C_{4}a^2\left( 2x+h\right)^2h\right)dh dx \\
&= \frac{1}{2}\int_{-1}^{1}\int_{0}^{\frac{1}{a^{2}}} h^{-1/2} \exp\left(-C_{4}a^2\left( 2x+h\right)^2h\right)dh dx \\
&\quad+\frac{1}{2}\int_{-1}^{1}\int_{\frac{1}{a^{2}}}^{1} h^{-1/2} \exp\left(-C_{4}a^2\left( 2x+h\right)^2h\right)dh dx \\ 
&=: \frac{1}{2} \big(\mathcal I_{2,1}(a)+\mathcal I_{2,2}(a)\big).
\end{aligned} 
\end{equation}
We first deal with $\mathcal I_{2,1}(a)$. By making the same change of variable as in case I, that is $y=2a^{2/3}x$ and $g=a^{2/3}h$, we get that 
\begin{equation}  \label{int-11}
\begin{aligned} 
\mathcal I_{2,1}(a) &=\frac{1}{2a} \int_{-2a^{2/3}}^{2a^{2/3}}\int_{0}^{a^{-4/3}}g^{-1/2}\exp\left(-C_{4}(y+g)^2g\right)dgdy \\
&\leq \frac{1}{2a} \int_{-2a^{2/3}}^{2a^{2/3}}\int_{0}^{a^{-4/3}}g^{-1/2} dgdy \\
&= \frac{1}{2a} \int_{-2a^{2/3}}^{2a^{2/3}}2a^{-2/3} dy \\
&= \frac{4}{a}.
\end{aligned} 
\end{equation}

Now we deal with $\mathcal I_{2,2}(a)$. 
Define 
$$
\sigma^{2} = \frac{1}{8C_{4}ha^{2}}, 
$$
and using the Gaussian density we get 
\begin{align*} 
 \int_{-\infty }^{\infty} \exp\left(-C_{4}a^2\left( 2x+h\right)^2h\right)dx &=\int_{-\infty }^{\infty}   \exp\left(-\frac{\left( x+h/2\right)^2}{2\sigma^{2}}\right)dx \\
 & = \sqrt{2\pi} \sigma. 
\end{align*} 
It follows that 
\begin{equation}  \label{int-51} 
\begin{aligned}
 \mathcal I_{2,2}(a) &= \int_{\frac{1}{a^{2}}}^{1}h^{-1/2} \int_{-\infty }^{\infty}    \exp\left(-C_{4}a^2\left( 2x+h\right)^2h\right)dx dh  \\
  &=  \int_{\frac{1}{a^{2}}}^{1}h^{-1/2} \sqrt{2\pi} \sigma dh \\ 
 &=\sqrt{\frac{2\pi}{ 8C_{4}}} \frac{1}{a}\int_{\frac{1}{a^{2}}}^{1}h^{-1}  dh \\ 
 &= C\frac{\log a}{a}, 
\end{aligned}
\end{equation} 
where $C$ is independent of $a$. From \eqref{i1-2}--\eqref{int-51} we get (ii) for $ \mathcal I_{2}(a)$. 
\\

Next, we bound $\mathcal I_{1}(a)$. By setting $h=x_2-x_1$ and $x=x_1$ we get 
\begin{align*} 
\mathcal I_{1}(a) & \leq  \int_{0}^{1}\int_{0}^{1}\mathbf{1}_{\{2x+h \leq
1\}} h^{-1/2}
 \exp\left(-C_{3}a^2(x_{1}+x_{2})^2h\right)dhdx 
\\
& \leq\int_{0}^{1}\int_{0}^{1} h^{-1/2}
 \exp\left(-C_{3}a^2(x_{1}+x_{2})^2h\right)dhdx .
\end{align*} 
We can therefore bound $\mathcal I_{1}(a)$ similarly as we bounded the right hand side of \eqref{i1-2}, so we are done. 
\end{proof}

\appendix
\label{appendix}

\section{Some heuristic ideas}
For intuitive purposes, we define $R$ so that the processes in this section are restricted to a ball of radius $R$. 
\subsection{Flory's argument for self-avoiding walk in two dimensions}
\label{subsec:Flory}
Since this is an imprecise argument, we will freely assume that 
the walk $(S_n)_{n\in\mathbf{N}_0}$ is a Brownian motion $B(t)$. 
Large deviation theory suggests that probabilities such as $Q_T(A)$ 
are dominated by a ``most likely path'' $(X(t))_{t\in[0,T]}$, 
with approximate probability
\begin{equation} \label{eq:exponentials}
\exp\left(-\beta\int_{\mathbf{R}^2}\ell^X_T(y)^2dy\right)
  \exp\left(-\frac{1}{2}\int_{0}^{T}|X'(t)|^2dt\right)
\end{equation}
where the second exponential is, roughly speaking, the probability 
of a Brownian motion path whose increments are independent normal 
variables.  Here we take $\ell_t^X$ to be the usual local time for 
$X$.  

Large deviation theory also suggests that the overall most likely 
path is realized when the two exponential terms are comparable.  
Now suppose that the walk $X$ is confined to a ball $B_R$ centered 
at the origin and of radius $R$.  The first exponential in 
\eqref{eq:exponentials} will be minimized when $\ell_T^X(y)$ is 
constant over $y\in B_R$, meaning that $\ell_T^X(y)=T/(\pi R^2)$.  
\begin{equation} \label{eq:appendix-ell}
\int_{B_R}\ell^X_T(y)^2dy=\frac{CT^2}{R^4}\cdot R^2=\frac{CT^2}{R^2}.
\end{equation}

As for the second exponential term in \eqref{eq:exponentials}, if 
$X$ reaches the boundary of $B_R$ (as it must for $\ell_T^X$ to be 
constant on $B_R$), then the most likely path has constant velocity,
$X(t)=(R/T)t$.  Then 
\begin{equation} \label{eq:appendix-X}
\int_{0}^{T}|X'(t)|^2dt=T\cdot\left(\frac{R}{T}\right)^2=\frac{R^2}{T}.
\end{equation}

Equating \eqref{eq:appendix-ell} and \eqref{eq:appendix-X}, we get 
\begin{equation*}
R=CT^{3/4}
\end{equation*}
which yields $\nu=3/4$ as conjectured (see \eqref{conj-nu}). 

We have ignored the inconvenient fact that the minimizing paths for 
the two exponential terms are different.  

\subsection{An intuitive justification for Theorem \ref{th:main}}
\label{subsec:thm}

The intuition is quite close to that in Section \ref{subsec:Flory}.
Indeed, in the exponential term involving local time we again assume 
that $\ell_t(y)$ is constant over $y\in[-R,R]$ and so 
$\ell_t(y)=J/(2R)$.  Then,
\begin{equation} \label{eq:appendix-ell-u}
\int_{0}^{T}\int_{-R}^{R}\ell^2_t(y)dydt
= 2TR\left(\frac{J}{2R}\right)^2=\frac{CTJ^2}{R}.
\end{equation}

The main difference is that the approximate probability for a white 
noise $\dot{W}$, intuitively speaking, is 
$\exp(-\frac{1}{2}\int_{0}^{T}\int_{0}^{J}(\dot{W}(t,x))^2dxdt)$, 
since we think of $(\dot{W}(t,x))_{t\in[0,T],x\in[0,J]}$ as a collection of
independent Gaussian variables.  But using \eqref{eq:edwards-wilkinson} to
substitute for $\dot{W}$, we get an approximate probability of 
\begin{equation*}
\exp\left(-\frac{1}{2}\int_{0}^{T}\int_{0}^{J}
  \left[(\partial_tu-\partial_x^2u)(t,x)\right]^2dxdt\right).
\end{equation*}
In such problems the minimizer $u$ is often constant in $t$, giving us
\begin{equation*}
\exp\left(-\frac{T}{2}\int_{0}^{J}\left[\partial_x^2u(t,x)\right]^2dx\right).
\end{equation*}
We might think that the minimizer $u$ has a constant value of 
$|\partial_x^2u|$, consistent with the Neumann boundary conditions.  
Such a function could be 
\begin{equation*}
u(x)=
\begin{cases}
ax^2- aJ^2/4 & \text{if $x\in[0,J/2]$}, \\
a(J-x)^2-aJ^2/4 & \text{if $x\in[J/2,J]$} .
\end{cases}
\end{equation*}
If we require $u(0)=R$ and $u(J)=-R$ then we get $a=4R/J^2$ and 
$|u''(x)|=2a=8R/J^2$.  Then
\begin{equation} \label{eq:appendix-u}
\frac{T}{2}\int_{0}^{J}\left[\partial_x^2u(t,x)\right]^2dx
= CTJ\left(\frac{R}{J^2}\right)^2
= \frac{CTR^2}{J^3}.
\end{equation}
Equating \eqref{eq:appendix-ell-u} and \eqref{eq:appendix-u}, we get
\begin{equation*}
R=CJ^{5/3}
\end{equation*}
which gives the dependence of $R$ on $J$ in Theorem \ref{th:main}.

\subsection{Intuition behind the Conjecture}

The intuition in this case is almost the same as in Section 
\ref{subsec:thm}.  Again, the approximate probability of a path is 
given as a product of two exponentials as in \eqref{eq:exponentials}. 
The second exponential is the same as in \eqref{eq:appendix-u}, 
giving an exponent of 
\begin{equation} \label{eq:conj-u}
\frac{CTR^2}{J^3}
\end{equation}

For the local time exponential, we are using a two dimensional ball 
$B_R$ of volume $CR^2$, so we get an exponent of 
\begin{equation} \label{eq:appendix-ell-2-dim}
\int_{0}^{T}\int_{-R}^{R}\ell^2_t(y)dy
= CTR^2\left(\frac{J}{R^2}\right)^2=\frac{CTJ^2}{R^2}.
\end{equation}

Equating the terms \eqref{eq:conj-u} and \eqref{eq:appendix-ell-2-dim}, 
we get
\begin{equation*}
R=CJ^{5/4}
\end{equation*}
as in Conjecture \ref{conj:dim-2}.

%\bibliography{polymer}
%\bibliographystyle{amsalpha}

%\bibliographystyle{plain}
%\printindex
%\bibliography{polymer.bib}

\end{document}